\newcommand\cyr{%
\renewcommand\rmdefault{wncyr}%
\renewcommand\sfdefault{wncyss}%
\renewcommand\encodingdefault{OT2}%
\normalfont
\selectfont}
\DeclareTextFontCommand{\textcyr}{\cyr}
\renewcommand{\setminus}{{\smallsetminus}}
\newcommand{\HH}{{\mathbb{H}}}
\newcommand{\RR}{{\mathbb{R}}}
\newcommand{\ZZ}{{\mathbb{Z}}}
\newcommand{\CC}{{\mathbb{C}}}
\newcommand{\rr}{{\mathbf{r}}}
\newcommand{\qq}{{\mathbf{q}}}
\newcommand{\vv}{{\mathbf{v}}}
\newcommand{\ww}{{\mathbf{w}}}
\newcommand{\ang}{{\mathcal{A}}}
\newcommand{\circang}{{\mathcal{SA}}}
\newcommand{\bdy}{{\partial}}
\newcommand{\vol}{{\mathcal{V}}}
\newcommand{\abs}[1]{{\left\vert #1 \right\vert}}
\newcommand{\thh}{{\mathrm{th}}}
\newcommand{\rank}{{\mathrm{rank}}}
\newcommand{\eps}{{\varepsilon}}
\newcommand{\lob}{{\textcyr{L}}}
\newcommand{\Imm}{{\mathrm{Im}}}
\newcommand{\Ree}{{\mathrm{Re}}}
\theoremstyle{plain}
\newtheorem{theorem}{Theorem}[section]
\newtheorem{lemma}[theorem]{Lemma}
\newtheorem{prop}[theorem]{Proposition}
\newtheorem*{namedtheorem}{\theoremname}
\newcommand{\theoremname}{testing}
\newenvironment{named}[1]{\renewcommand{\theoremname}{#1}\begin{namedtheorem}}{\end{namedtheorem}}
\theoremstyle{definition}
\newtheorem{define}[theorem]{Definition}
\newtheorem{example}[theorem]{Example}
\numberwithin{equation}{section}
\begin{document}
\title{From angled triangulations to hyperbolic structures}

\author{David Futer}
\address{Department of Mathematics, Temple University,
Philadelphia, PA 19122, USA}
\email{dfuter@temple.edu}
\thanks{Futer is supported in part by NSF Grant No. DMS--1007221.}

\author{Fran\c{c}ois Gu\'eritaud}
\address{Laboratoire Paul Painlev\'e, CNRS UMR 8524, Universit\'e de Lille 1, 59650 Ville\-neuve d'Ascq, France}
\email{Francois.Gueritaud@math.univ-lille1.fr}
\thanks{Gu\'eritaud is supported in part by the ANR program ETTT (ANR-09-BLAN-0116-01).}

\date{ \today}

\begin{abstract}
This survey paper contains an elementary exposition of Casson and Rivin's
technique for finding the hyperbolic metric on a $3$--manifold $M$ with toroidal boundary.
We also survey a number of applications of this technique.

The method involves subdividing $M$ into ideal tetrahedra and solving a system of gluing 
equations to find hyperbolic shapes for the tetrahedra. The gluing equations decompose into a
linear and non-linear part. The solutions to the linear equations form a convex polytope $\ang$. The 
solution to the non-linear part (unique if it exists) is a critical point of a certain \emph{volume functional} 
on this polytope. The main contribution of this paper is an elementary proof of Rivin's theorem
that a critical point of the volume functional on $\ang$ produces a complete hyperbolic structure 
on $M$.
\end{abstract}

\maketitle

\section{Introduction}\label{sec:intro}

Around 1980, William Thurston showed that ``almost every'' $3$--manifold $M$, whether closed or bounded, admits a complete hyperbolic metric \cite{thurston:survey}. When the boundary of $M$ consists of tori, this metric is unique up to isometry \cite{mostow:rigidity, prasad:rigidity}. Thurston introduced a method for finding this unique metric. The idea was to subdivide the interior of $M$ into \emph{ideal tetrahedra} (tetrahedra whose vertices are removed), and then give those tetrahedra hyperbolic shapes that glue up coherently in $M$. The shape of a hyperbolic ideal tetrahedron can be completely described by a single complex number, namely the cross--ratio of its four vertices on the sphere at infinity. Thurston wrote down a system of \emph{gluing equations} in those complex parameters, whose solution corresponds to the complete hyperbolic metric on the interior 
 of $M$ \cite{thurston:notes}.

The difficulty with Thurston's approach is that this non-linear system of equations is very difficult to solve in practice. Even proving the existence of a positively oriented solution, for a given triangulation, often turns out to be a daunting task.

In the 1990s, Andrew Casson and Igor Rivin discovered a powerful technique for solving Thurston's gluing equations. Their main idea (which builds on a result of Colin de Verdi{\`e}re \cite{cdv:variations}) was to separate the system into a \emph{linear part} and a \emph{non-linear part}. The linear part of the equations corresponds geometrically to a study of dihedral angles of the individual tetrahedra. In order for the tetrahedra to fit together in $M$, the dihedral angles at each edge of $M$ must sum to $2\pi$; in order for the tetrahedra to be positively oriented, all the angles must be positive. For a given triangulation $\tau$, the space of solutions to this linear system of equations and inequalities is a convex polytope $\ang(\tau)$, with compact closure $\overline{\ang(\tau)}$. A point of $\ang(\tau)$, corresponding to an assignment of tetrahedron shapes that satisfies the angle conditions, is called an \emph{angle structure} on $\tau$.  See Definition  \ref{def:angled-polytope} for details.
% description of $\ang(\tau)$.

An angle structure on $\tau$ is weaker than a hyperbolic metric, because the linear equations that define $\ang(\tau)$ impose a strictly weaker condition on tetrahedra than do Thurston's gluing equations. Nevertheless, Casson proved

\begin{theorem}\label{thm:ang-hyperbolization}
Let $M$ be an orientable $3$--manifold with boundary consisting of tori, and let $\tau$ be an ideal triangulation of $M$. If $\ang(\tau) \neq \emptyset$, then $M$ admits a complete hyperbolic metric.
\end{theorem}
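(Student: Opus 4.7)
Following the Casson–Rivin strategy outlined in the abstract, the plan is to prove the theorem by maximizing a volume functional on the compact polytope $\overline{\ang(\tau)}$, arguing that the maximum is an interior critical point, and showing that such a critical point yields a \emph{complete} hyperbolic metric on $M$. Define $\vol \colon \overline{\ang(\tau)} \to \RR$ as follows: for an ideal tetrahedron $\Delta$ with dihedral angles $\alpha,\beta,\gamma$ (one per pair of opposite edges, $\alpha+\beta+\gamma=\pi$), set
\[
\vol(\Delta) = \lob(\alpha) + \lob(\beta) + \lob(\gamma),
\]
where $\lob$ is the Lobachevsky function, and extend by summation over all tetrahedra of $\tau$. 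Since $\lob''(x) = -\cot x$, each summand is strictly concave on $(0,\pi)$, so $\vol$ is strictly concave on $\ang(\tau)$ and attains a maximum on $\overline{\ang(\tau)}$.

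The first substantive step is to show this maximum lies in the open polytope $\ang(\tau)$, not on its boundary $\bdy \overline{\ang(\tau)}$. Because $\lob'(x) = -\log|2\sin x| \to +\infty$ as $x \to 0^+$ and every boundary point has some dihedral angle equal to $0$, a first-variation computation at such a point along a segment into $\ang(\tau)$ (which is non-empty by hypothesis) shows that $\vol$ strictly increases off the boundary with infinite initial slope. The maximum is therefore an interior critical point $p \in \ang(\tau)$.

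Next, the polytope $\ang(\tau)$ is cut out by the linear equalities \emph{tetrahedron-angle sums equal $\pi$} and \emph{edge-angle sums equal $2\pi$}, so the method of Lagrange multipliers applied to $\vol$ at $p$ — using $\partial \vol / \partial \alpha = -\log|2\sin\alpha|$ — produces, when combined with the (already satisfied) argument conditions around each edge, the classical Thurston gluing equations $\prod_{\Delta \ni e} z_{\Delta,e} = 1$ around every edge $e$, where $z_{\Delta,e} = e^{i \alpha_{\Delta,e}} \sin \gamma_{\Delta,e}/\sin \beta_{\Delta,e}$ is the complex shape parameter assigned to the edge of $\Delta$ carrying dihedral angle $\alpha_{\Delta,e}$. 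Consequently the tetrahedra of $\tau$ glue up to a (a priori incomplete) hyperbolic structure on the interior of $M$.

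The last and most delicate task is to upgrade this to a \emph{complete} hyperbolic structure: the holonomy of every peripheral curve must act by translation rather than by a nontrivial dilation. The angle structure already equips each cusp cross-section with a similarity structure, gotten by gluing the Euclidean triangles that arise as ideal-vertex links, and completeness amounts to the statement that these structures are genuinely Euclidean. To close the argument I would evaluate the critical-point condition on a carefully chosen family of ``peripheral'' tangent vectors to $\ang(\tau)$ — those corresponding to deformations that alter cusp holonomies without touching the edge equations — and combine with the fact that each boundary component is a torus, so $\chi(\bdy M) = 0$, to force the dilation component of each cusp holonomy to vanish. Setting up this correspondence between peripheral tangent directions and cusp-completeness equations, and checking that the critical-point condition genuinely sees all of them, is where I expect the main technical obstacle to lie.
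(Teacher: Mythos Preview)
Your approach differs fundamentally from the paper's, and contains a genuine gap.

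The paper does \emph{not} prove Theorem~\ref{thm:ang-hyperbolization} by volume maximization. Instead, it sketches (with references to Lackenby and to Futer--Gu\'eritaud) an argument via Thurston's hyperbolization theorem: the angle structure is used to assign a combinatorial ``area'' to normal surfaces that mimics hyperbolic area, and one shows that any essential sphere, disk, torus, or annulus placed in normal form would have non-positive area and hence be inessential. Geometrization for Haken manifolds then furnishes the hyperbolic metric. The volume functional plays no role whatsoever in this proof; Theorem~\ref{thm:volume-max} is a separate statement about \emph{which} angle structure realizes the metric, assuming one exists.

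Your proposed route fails at its first substantive step: it is simply \emph{false} that the maximum of $\vol$ on $\overline{\ang(\tau)}$ always lies in the interior. The paper says so explicitly in the introduction (``a critical point of $\vol$ does not always exist''). The flaw in your first-variation argument is that you account only for angles equal to $0$. But whenever a tetrahedron degenerates as $(0,0,\pi)$, there is also an angle equal to $\pi$, and since $\lob'(x)=-\log|2\sin x|\to +\infty$ as $x\to\pi^-$ as well, decreasing that angle contributes $-\infty$ to the inward directional derivative, cancelling the $+\infty$ from the angles leaving $0$. A direct computation shows that the inward derivative of the volume of a single $(0,0,\pi)$ tetrahedron is \emph{finite}, so the (possibly negative) contributions of the non-degenerate tetrahedra can dominate. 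Concretely, whenever the hyperbolic metric on $M$ does not realize every tetrahedron of $\tau$ with positive orientation---and such triangulations with $\ang(\tau)\neq\emptyset$ certainly exist---the maximum of $\vol$ sits on $\partial\overline{\ang(\tau)}$ and Theorem~\ref{thm:volume-max} yields nothing. Volume maximization therefore cannot by itself prove Theorem~\ref{thm:ang-hyperbolization}; an independent existence argument is genuinely required, and the paper's is normal surface theory plus geometrization.
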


% \pagebreak

In other words, the existence of an angle structure implies the existence of a hyperbolic structure. Finding this hyperbolic structure requires solving the non-linear part of the gluing equations. To do this, Casson and Rivin introduced a \emph{volume functional} $\vol: \ang(\tau) \to \RR$, which assigns to every angle structure on $\tau$ the sum of the hyperbolic volumes of the tetrahedra in this structure. See Definition \ref{def:vol-functional} for a precise definition. As we shall see in Section \ref{sec:volume-max}, the function $\vol$ has a number of pleasant properties: it is smooth and strictly concave down on $\ang(\tau)$, and extends continuously to $\overline{\ang(\tau)}$. Casson and Rivin independently proved

\begin{theorem}\label{thm:volume-max}
Let $M$ be an orientable $3$--manifold with boundary consisting of tori, and let $\tau$ be an ideal triangulation of $M$. Then a point $p \in \ang(\tau)$ corresponds to a complete hyperbolic metric on the interior of $M$ if and only if $p$ is a critical point of the functional $\vol: \ang(\tau) \to \RR$.
\end{theorem}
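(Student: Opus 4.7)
The plan is to analyze the critical-point condition on $\ang(\tau)$ via Lagrange multipliers and to match it to Thurston's full system of gluing equations. Using the identity $\lob'(\theta) = -\ln\abs{2\sin\theta}$ and the ideal-tetrahedron volume formula $V(\alpha,\beta,\gamma) = \lob(\alpha) + \lob(\beta) + \lob(\gamma)$, a tangent vector $w = (w_j^i)$ to $\ang(\tau)$ yields
\[
d\vol(w) \;=\; -\sum_{i,j}\bigl(\ln\sin \theta_j^i\bigr)\, w_j^i,
\]
the constant $\ln 2$ dropping out because $w$ obeys the per-tetrahedron constraint $w_\alpha^i + w_\beta^i + w_\gamma^i = 0$. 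The remaining constraints defining $T_p\ang(\tau)$ are the edge-sum conditions $\sum_{\theta\text{ at }e} w_\theta = 0$, one for each edge of $M$.

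To detect Thurston's equations I would introduce \emph{leading--trailing} deformations. To each oriented closed normal curve $\eta$ on the cusp complex (the triangulated surface built from vertex cross-sections of the tetrahedra), assign a combination $w^{(\eta)}$ equal to $+1$ at every corner angle on the leading side of $\eta$ and $-1$ on the trailing side. A combinatorial check, using that $\eta$ returns to its starting point, shows that $w^{(\eta)}$ respects both the tet-sum and the edge-sum conditions, so $w^{(\eta)}\in T_p\ang(\tau)$. Two families of $\eta$ are decisive: the small loops $\eta_e$ around each vertex of the link (one for each edge $e$ of $M$), and representatives of a basis of $\pi_1$ of each cusp torus. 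Using the shape-parameter identity $\ln\abs{z} = \ln\sin\beta - \ln\sin\gamma$, with $z = e^{i\alpha}\sin\beta/\sin\gamma$ for the three angles of $T_i$ taken in cyclic order, one reads $d\vol(w^{(\eta)})$ as exactly the real part of $\sum_\eta \log z$. Pairing this with the imaginary part, which is the angle-sum already built into $\ang(\tau)$, recovers both the complex edge equations $\prod_{\text{at }e} z = 1$ and the complex cusp-completeness equations $\rho(\gamma_j) = 1$. By Thurston's theorem these determine a complete hyperbolic metric on the interior of $M$. The converse direction runs the same computation backwards: a complete hyperbolic metric forces $d\vol(w^{(\eta)}) = 0$ for each of these $\eta$, and since the $w^{(\eta)}$ span $T_p\ang(\tau)$, full criticality follows.

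The main obstacle, I expect, is the linear-algebraic spanning claim: that the set $\{w^{(\eta_e)}, w^{(\gamma_j)}\}$ exhausts $T_p\ang(\tau)$ and matches the gluing plus completeness equations one-for-one. This is ultimately a topological statement about the cusp triangulation, saying that edge loops and cusp-generator loops together generate the relevant $H_1$ of the link complex and that no independent constraints are missed. Translating this combinatorial input cleanly into a basis for $T_p\ang(\tau)$, while keeping the cyclic conventions for the shape parameters consistent throughout, is the delicate part of the argument; the calculus side is essentially bookkeeping once $d\vol$ is written in log-sine form.
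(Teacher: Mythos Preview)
Your approach is essentially the paper's: compute $d\vol$ in log-sine form, introduce leading--trailing deformations $w^{(\eta)}$, identify $d\vol(w^{(\eta)})$ with $\Ree(H(\eta))$, and use the spanning claim for the converse. You have also correctly located the hard linear-algebra step (spanning $T_p\ang(\tau)$), which the paper handles via a dimension count (Proposition~\ref{prop:polytope} and Proposition~\ref{prop:tangent-span}).

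There is, however, one genuine gap in your forward direction. You write that ``pairing this with the imaginary part, which is the angle-sum already built into $\ang(\tau)$, recovers \ldots\ the complex cusp-completeness equations.'' This is not right: the definition of $\ang(\tau)$ imposes the imaginary part of the \emph{edge} equations (dihedral angles around each edge sum to $2\pi$), but it imposes \emph{no} condition whatsoever on the imaginary part of the holonomy of a cusp-basis curve $\gamma_j$. So from criticality you only get $\Ree(H(\gamma_j))=0$, and you still owe an argument that $\Imm(H(\gamma_j))=0$ as well. The paper fills this gap with a short geometric step: once the edge equations hold, the boundary triangles on a cusp torus $C$ develop coherently into $\CC$, and the conditions $\Ree(H(\sigma_1))=\Ree(H(\sigma_2))=0$ say that the fundamental domain for $C$ has opposite sides of equal length. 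That forces it to be a parallelogram, hence both deck transformations are pure translations and $H(\sigma_1)=H(\sigma_2)=0$. Without something like this, the forward implication is incomplete.
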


All told, the Casson--Rivin program can be summarized as follows. Solving the linear part of Thurston's gluing equations produces the convex polytope $\ang(\tau)$. Solving this linear system is straightforward and algorithmic; by Theorem \ref{thm:ang-hyperbolization}, the existence of a solution implies that a hyperbolic structure also exists. By Theorem \ref{thm:volume-max}, solving the non-linear part of the gluing equations amounts to finding a critical point (necessarily a global maximum) of the functional $\vol: \ang(\tau) \to \RR$. In practice, the search for this maximum point can be accomplished by gradient--flow algorithms. It is worth emphasizing that a critical point of $\vol$ does not always exist: the maximum of $\vol$ over the compact closure $\overline{\ang(\tau)}$ will occur at a critical point in $\ang(\tau)$ if and only all the tetrahedra of $\tau$ are positively oriented in the hyperbolic metric on $M$.

\subsection{Where to find proofs}
Proofs of Theorem \ref{thm:ang-hyperbolization} are readily available in the literature. A nice account appears in Lackenby \cite[Corollary 4.6]{lackenby:surgery}, and a slightly more general result appears in Futer and Gu\'eritaud \cite[Theorem 1.1]{fg-arborescent}. Here is a brief summary of the argument. By Thurston's hyperbolization theorem \cite{thurston:survey}, the existence of a hyperbolic structure is equivalent to the \emph{non}-existence of essential spheres, disks, tori, and annuli in $M$. Any such essential surface $S$ can be moved into a \emph{normal form} relative to the triangulation $\tau$; this means that every component of intersection between $S$ and a tetrahedron is a disk in one of several combinatorial types. The dihedral angles that come with an angle structure permit a natural measure of complexity for normal disks, which mimics the area of hyperbolic polygons. As a result, one can show that every surface $S$ with non-negative Euler characteristic must have non-positive area, which means it is inessential. Hence $M$ is hyperbolic. 

By contrast, direct proofs of Theorem \ref{thm:volume-max} are harder to find. The standard reference for this result is Rivin \cite{rivin:volume}. However, the focus of Rivin's paper is somewhat different: he mainly studies the situation where $M$ is an ideal polyhedron, subdivided into ideal tetrahedra that meet at a single vertex. The notion of an angle space $\ang(\tau)$ and a volume functional $\vol$ still makes sense in this context, and Rivin's proof of the analogous result \cite[Lemma 6.12 and Theorem 6.16]{rivin:volume} extends (with some effort by the reader) to manifolds with torus boundary. 

One reference that contains the above formulation of Theorem \ref{thm:volume-max}, together with a direct proof, is Chan's undergraduate honors thesis \cite[Theorem 5.1]{chan-hodgson}. However, this thesis is not widely available. In addition, Chan's argument relies on a certain symplectic pairing introduced by Neumann and Zagier \cite{neumann-zagier}, a layer of complexity that is not actually necessary.

The central goal of this paper is to write down an elementary, self-contained proof of Theorem \ref{thm:volume-max}. The argument is organized as follows. In Section \ref{sec:gluing}, we recall the definition of Thurston's gluing equations. In Section \ref{sec:angle-struct}, we give a rigorous definition of the polytope of angle structures $\ang(\tau)$ and compute its dimension. In Section \ref{sec:lead-trail}, we introduce a natural set of tangent vectors, called \emph{leading--trailing deformations}, which span the tangent space $T_p \ang(\tau)$. In Section \ref{sec:volume-max}, we focus on the volume functional $\vol:\ang(\tau) \to \RR$. We will show that the gluing equations for $\tau$ are in $1$--$1$ correspondence with the leading--trailing deformations; in particular, the non-linear part of a gluing equation is satisfied if and only if the derivative of $\vol$ vanishes along the corresponding deformation. It will follow that the full system of gluing equations is satisfied at $p \in \ang(\tau)$ if and only $p$ is a critical point of $\vol$.

\subsection{Extending and applying the method} Section \ref{sec:extensions}, at the end of the paper, surveys some of the ways in which the Casson--Rivin program has been generalized and applied. Among the generalizations are versions of Theorem \ref{thm:volume-max} for manifolds with polyhedral boundary (Theorem \ref{thm:polyhedra}) and Dehn fillings of cusped manifolds (Theorem \ref{thm:angled-surgery}). Another generalization of the method considers angles modulo $2\pi$: we discuss this briefly while referring to Luo's paper in this volume \cite{luo:survey} for a much more thorough treatment.

Among the applications are explicit constructions of the hyperbolic metric on several families of $3$--manifolds, including punctured torus bundles \cite{gf:bundles}, certain link complements \cite{gueritaud:thesis}, and ``generic'' Dehn fillings of multi-cusped manifolds \cite{gueritaud-schleimer}. For each of these families of manifolds, the volume associated to \emph{any} angle structure gives useful lower bounds on the hyperbolic volume of the manifold: see Theorem \ref{thm:vol-estimate}. For each of these families of manifolds, certain angle inequalities obtained while proving that $\vol: \ang(\tau) \to \RR$ has a critical point also imply that the combinatorially natural triangulation $\tau$ is in fact the \emph{geometrically canonical} way to subdivide $M$. See Section \ref{sec:canonical} for details.

Finally, the behavior of the volume functional $\vol: \ang(\tau) \to \RR$ gives a surprising and short proof of Weil's local rigidity theorem for hyperbolic metrics \cite{weil:rigidity}. Because $\vol$ is strictly concave down (Lemma \ref{lemma:vol-deriv}), any critical point of $\vol$ on $\ang(\tau)$ must be unique. As a result, there is only one complete metric on $M$ in which all the tetrahedra of $\tau$ are positively oriented. This turns out to imply Weil's local rigidity theorem, as we shall show in Theorem \ref{thm:weil-rigidity}.

\subsection{A reader's guide} For a relative novice to this subject, we recommend the following strategy. Skim over Section \ref{sec:gluing}, as needed, to recall the crucial definitions of gluing equations and holonomy. Read carefully the definitions and opening discussion of Sections \ref{sec:angle-struct} and \ref{sec:lead-trail}, but skip the proofs in those sections entirely. Then, proceed directly to Section \ref{sec:volume-max}, where the computation of Proposition \ref{prop:volume-computation} is particularly important. This computation almost immediately implies the forward direction of Theorem \ref{thm:volume-max}. On the other hand, the 
%
% [[FG: remove]]
%
% relatively hard 
%
linear--algebra results in Sections \ref{sec:angle-struct} and \ref{sec:lead-trail} involve completely different ideas compared to the rest of the paper, and are only needed for the reverse direction of Theorem \ref{thm:volume-max} (complete metric implies critical point of $\vol$) and for various applications in Section \ref{sec:extensions}.

% [[FG: replace]]
%For a pleasant introduction to the method, we also invite the reader to study 
For a practical example of the method, the reader may also study
the proof of \cite[Lemma 6.2]{gf:bundles}, which proves Theorem \ref{thm:volume-max} in the special case of punctured--torus bundles. 

% Although leading--trailing deformations were probably folklore knowledge to experts in the field before \cite{gf:bundles}, to our knowledge that lemma is the first place where they were directly used to understand directional derivatives of the volume function $\vol$.

\subsection{Acknowledgements} The proof of Theorem \ref{thm:volume-max} was developed while the two authors were preparing to give a mini-course at Osaka University in January 2006. This argument was then tested and refined in front of three workshop audiences containing many graduate students: at Osaka University in 2006, at Zhejiang University in 2007, and at Columbia University in 2009. We thank all the participants of those workshops for their suggestions. We are also grateful to the organizers of the Columbia workshop, ``Interactions Between Hyperbolic Geometry, Quantum Topology and Number Theory,'' for their encouragement to write up this material for the workshop proceedings.  Finally, this paper benefited from a number of conversations with Marc Culler, Feng Luo, Igor Rivin, Makoto Sakuma, and Louis Theran.

\section{Gluing equations}\label{sec:gluing}

For the length of this paper, $M$ will typically denote a compact, connected, orientable $3$--manifold whose boundary consists of tori. For example, such a manifold can be obtained from $S^3$ by removing an open tubular neighborhood of a knot or link. 
%	To study the interior of $M$, we shall subdivide it into
%	into \emph{ideal tetrahedra}, namely tetrahedra whose vertices have been removed. 
An \emph{ideal triangulation} $\tau$ is a subdivision of $M \setminus \bdy M$ into ideal tetrahedra, glued in pairs along their faces. One way to recover the compact manifold $M$ is to truncate all the vertices of the tetrahedra: then, the triangles created by the truncation will fit together to tile $\bdy M$.

As discussed in the Introduction, our eventual goal is to find a complete hyperbolic metric on $M$ by gluing together metric tetrahedra. The metric models for the tetrahedra in $M$ come from ideal tetrahedra in $\HH^3$:

\begin{define}\label{def:hyp-ideal}
A \emph{hyperbolic ideal tetrahedron}  $T$ is the convex hull in $\HH^3$ of four distinct points on $\bdy \HH^3$. The four points on $\bdy \HH^3$ are called  \emph{ideal vertices} of $T$, and are not part of the tetrahedron. The tetrahedron $T$ is called \emph{degenerate} if it lies in a single plane, and \emph{non-degenerate} otherwise.
\end{define}

Recall that an isometry of $\HH^3$ is completely determined by its action on three points on $\bdy \HH^3$. Thus we may assume, for concreteness, that three vertices of $T$ lie at $0,1, \infty$ in the upper half-space model. If $T$ is non-degenerate, we may also assume that the fourth vertex lies at $z \in \CC$, with $\Imm (z) > 0$. This number $z \in \CC$ determines $T$ up to isometry.

Suppose we move $T$ by an orientation--preserving isometry of $\HH^3$, so that three vertices again lie at $0,1,\infty$, the fourth vertex again lies in the upper half-plane, and edge $e$ is mapped to the edge $0\infty$. Then the fourth vertex of $T$ will be sent to one of
\begin{equation}\label{eq:shape-param}
z, \quad z' = \frac{z-1}{z}  \quad \mbox{or} \quad z'' = \frac{1}{1-z}.
\end{equation}
The corresponding number $z$, $z'$, or $z''$ is called the \emph{shape parameter} of $e$. Notice that opposite edges of $T$ have the same shape parameter, and that $zz'z''=-1$. The arguments 
$$\arg z > 0, \quad \arg z' > 0, \quad \arg z'' > 0$$ represent the dihedral angles of $T$, or equivalently the angles of the Euclidean triangle with vertices $0,1,z$. If one of the vertices of $T$ is truncated by a horosphere, the intersection will be a Euclidean triangle in precisely this similarity class. This Euclidean triangle of intersection between $T$ and a horosphere is called a \emph{boundary triangle} of $T$. See Figure \ref{fig:shearing}, left.

\begin{figure}
\begin{center}
\psfrag{a}{$\alpha$}
\psfrag{b}{$\gamma$}
\psfrag{g}{$\beta$}
\psfrag{0}{$0$}
\psfrag{1}{$1$}
\psfrag{i}{$\infty$}
\psfrag{z}{$z$}
\includegraphics{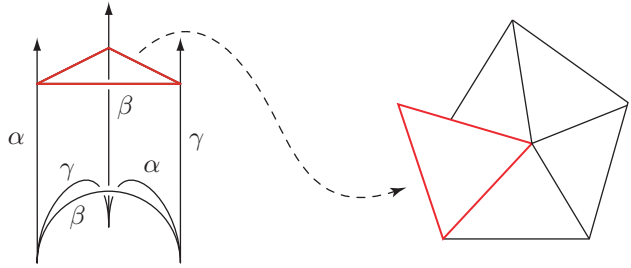}
\caption{Left: the shape of an ideal tetrahedron is defined by three dihedral angles. Right: the gluing of tetrahedra, seen from an ideal vertex.}
\label{fig:shearing}
\end{center}
\end{figure}

The tiling of $\bdy M$ by boundary triangles of the tetrahedra provides a way to understand what conditions are required to glue the tetrahedra coherently.

\begin{define}\label{def:normal}
Let $C$ be an oriented surface with a specified triangulation (for example, a boundary torus of $M$ with the tessellation by  boundary triangles). A \emph{segment} in $C$ is an embedded arc in one triangle, which is disjoint from the vertices of $C$, and whose endpoints lie in distinct edges of $C$. A \emph{normal closed curve} $\sigma \subset C$ is an 
%
% [[FG: remove]]
%embedded or 
%
immersed closed curve that is transverse to the edges of $C$, such that the intersection between $\sigma$ and a triangle is a union of segments.
\end{define}

\begin{define}\label{def:holonomy}
Let $\tau$ be an ideal triangulation of $M$, and let $C$ be one torus component of $\bdy M$. Then $\tau$ induces a tessellation of $C$  by boundary triangles. If each tetrahedron of $\tau$ is assigned a hyperbolic shape, then every corner of each boundary triangle can be labeled with the corresponding shape parameter.

\begin{figure}%[h]
\begin{center}
\psfrag{z1}{$z_1$}
\psfrag{z2}{$z_2$}
\psfrag{z3}{$z_3$}
\psfrag{z4}{$z_4$}
\psfrag{z5}{$z_5$}
\psfrag{z6}{$z_6$}
\psfrag{z7}{$z_7$}
\psfrag{z8}{$z_8$}
\includegraphics{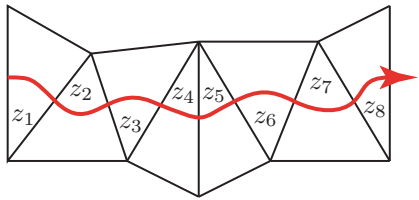}
\caption{A normal path $\sigma$ passing through the tessellation of a torus by boundary triangles.}
\label{fig:holonomy}
\end{center}
\end{figure}

Let $\sigma$ be an oriented normal closed curve in $C$. Then every segment of $\sigma$ in a triangle of $C$  cuts off a single vertex of a triangle, labeled with a single shape parameter (see Figure \ref{fig:holonomy}). Let $z_1, \ldots, z_k$ be the sequence of shape parameters corresponding to the segments of $\sigma$. Then the \emph{holonomy} of $\sigma$ is defined to be
\begin{equation}\label{eq:holonomy}
H(\sigma) = \sum_{i=1}^{k} \epsilon_i \log( z_i),
\end{equation}
where $\epsilon_i = 1$ for corners of triangles to the left of $\sigma$ and $\epsilon_i = -1$ for corners of triangles to the right of $\sigma$. We always choose the branch of the log where $0< \arg(z_i) <\pi$ for $\Imm(z_i)>0$.
\end{define}

\begin{define}\label{def:gluing}
Thurston's \emph{gluing equations} for a triangulation $\tau$ require, in brief, that the holonomy of every curve $\sigma \subset \bdy M$ should be trivial. More precisely, the system consists of \emph{edge equations} and \emph{completeness equations}. The edge equation for an edge $e$ of $\tau$ requires that all the shape parameters adjacent to $e$ satisfy
\begin{equation}\label{eq:edge-gluing}
\sum_{i=1}^{k} \log( z_i) = 2 \pi i.
\end{equation}
The \emph{completeness equations} require that, if $\bdy M$ consists of $k$ tori, there should be a collection of simple closed normal curves $\sigma_1, \ldots, \sigma_{2k}$ spanning $H_1(\bdy M)$, such that
\begin{equation}\label{eq:completeness}
H(\sigma_j) = 0 \quad \forall j.
\end{equation}
\end{define}

As the name suggests, the completeness equations ensure we obtain a complete metric.

\begin{prop}\label{prop:completeness}
Let $\tau$ be an ideal triangulation of $M$. Suppose that each ideal tetrahedron of $\tau$ is assigned a non-degenerate hyperbolic shape with positive imaginary part, as above. If the shape parameters of the tetrahedra satisfy the edge gluing equations, these metric tetrahedra can be glued together to obtain a (possibly incomplete) hyperbolic metric on $M \setminus \bdy M$. This metric will be complete if and only if the completeness equations are satisfied.
\end{prop}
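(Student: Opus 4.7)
My plan is to build a hyperbolic metric on $M \setminus \bdy M$ by gluing the individual tetrahedra, use the edge equations to show smoothness along each interior edge, and then analyze the similarity structure induced on each cusp torus in order to characterize completeness.

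On each tetrahedron $T$ of $\tau$, I would take the hyperbolic metric coming from its realization in $\HH^3$ as determined by its shape parameter. Each face identification is realized by the unique isometry of $\HH^3$ matching the prescribed labeled triples of ideal vertices, so the metric extends smoothly across the interior of each face. To extend across an interior edge $e$ with cyclic shape parameters $z_1,\ldots,z_k$, I would lift $e$ to a geodesic $\widetilde{e} \subset \HH^3$ and develop the surrounding tetrahedra consecutively about $\widetilde{e}$. Each gluing isometry fixes $\widetilde{e}$ pointwise, so the composition of all $k$ face identifications is a rotation about $\widetilde{e}$. Placing $\widetilde{e}$ as the vertical axis of upper half-space and tracking the successive outer vertices $1,\, z_1,\, z_1 z_2,\, \ldots,\, \prod z_i$ shows that this composition is the identity and that the accumulated dihedral angle equals $2\pi$ exactly when $\sum \log z_i = 2\pi i$, under the branch convention of Definition \ref{def:holonomy}. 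This is precisely the edge equation \eqref{eq:edge-gluing}, and it gives a smooth hyperbolic structure on $M \setminus \bdy M$.

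For completeness, fix a torus $C \subset \bdy M$ with its tessellation by boundary (Euclidean) triangles. Because horospherical cross sections at different cusp vertices cannot a priori be chosen consistently, gluings along edges need not preserve lengths, and $C$ inherits only a similarity structure. The edge equations kill rotational and dilational holonomy around each vertex of the tessellation, so this similarity structure is globally well-defined. Its holonomy $\mu(\sigma) \in \CC^*$ along a normal curve $\sigma$ can then be computed segment by segment: each segment cutting off a corner with shape parameter $z_i$ contributes a factor $z_i^{\epsilon_i}$ to $\mu(\sigma)$, where $\epsilon_i = \pm 1$ records whether the corner lies to the left or right of $\sigma$. Hence $\log \mu(\sigma) = H(\sigma)$, and the completeness equations \eqref{eq:completeness} are equivalent to the similarity structure on each boundary torus being genuinely Euclidean.

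I would close by invoking the standard principle that a hyperbolic manifold with cusped ends is complete if and only if each cusp link is a Euclidean torus. If each link is Euclidean, the holonomy at the corresponding end consists of parabolics fixing a single ideal point, and the end is completed by a horoball quotient; conversely, a nontrivial dilational component of cusp holonomy gives a loxodromic isometry of $\HH^3$, and iterating it shows that the developing image of the end accumulates on the loxodromic's second fixed point, reached in finite time by suitable geodesic rays. The main obstacle I anticipate is the careful bookkeeping of the signs $\epsilon_i$ and the log branches in the similarity holonomy computation: the sign and branch conventions of Definition \ref{def:holonomy} have to be matched exactly against the developing map of the similarity structure so that $\log \mu(\sigma) = H(\sigma)$ holds on the nose. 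The remaining ingredients are routine consequences of the $(G,X)$-structure formalism.
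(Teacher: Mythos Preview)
Your proposal is correct and follows essentially the same route as the paper's proof: glue along faces using that ideal triangles are all isometric, use the edge equations to extend the metric across edges, and then characterize completeness via the similarity (horospherical) structure on each cusp torus, with $H(\sigma)=0$ being exactly the condition that the holonomy is by Euclidean translations. The paper's version is terser---it simply asserts that completeness forces parabolic peripheral holonomy and that trivial $H(\sigma)$ lets the boundary triangles tile a horosphere---whereas you spell out the loxodromic alternative in the incomplete case and track the edge developing map more explicitly; but the underlying argument is the same.
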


\begin{proof}
Because all hyperbolic ideal triangles are isometric, there is no obstruction to gluing the tetrahedra isometrically along the interiors of their faces. If the edge equation is satisfied for every edge of $\tau$, then this isometric gluing along faces of $\tau$ extends continuously across the edges. Thus we obtain a (possibly incomplete) hyperbolic metric on $M \setminus \bdy M$ and a developing map $D: \widetilde{M} \to \HH^3$.

If the metric on $M$ is complete, then $D: \widetilde{M} \to \HH^3$ is an isometry. It is well-known that the deck transformations corresponding to a component of $\bdy M$ must be parabolic isometries of $\HH^3$. 
In particular, if a cusp of $M$ is lifted to $\infty$ in the upper half-space model of $\HH^3$, the parabolic isometries preserving that cusp are Euclidean translations of $\CC$. But a simple closed curve $\sigma \subset \bdy M$ realizes a translation of $\CC$ precisely when its holonomy is $H(\sigma)=0$. Thus equation (\ref{eq:completeness}) holds for all cusps of $M$.

Conversely, suppose that the completeness equations (\ref{eq:completeness}) are satisfied for a basis of $H_1(\bdy M)$. Then, for every cusp torus $C \subset \bdy M$, the boundary triangles of $C$ can be developed to tile a horosphere in $\HH^3$. In other words, the developing image of a collar neighborhood of $C$ is a horoball in $\HH^3$, which is complete. But if we remove the collar neighborhoods of every boundary torus in $M$, the remaining set is compact, hence also complete. Thus the metric on $M$ is complete.
\end{proof}

%	By viewing $T$ from $\infty$, one may easily compute that the three edges of $T$ that meet at $\infty$ have dihedral angles
%	$$\arg z, \quad \arg \frac{1}{1-z}, \quad \arg \frac{z-1}{z}.$$
%	These are precisely the angles of a Euclidean triangle with vertices $0, 1, z$ -- hence, these three angles sum to $\pi$. 
%	and that opposite dihedral angles are equal. Thus the tetrahedron $T$ is determined up to isometry by a triple of positive dihedral angles that sum to $\pi$.

\section{The polytope of angle structures}\label{sec:angle-struct}

As we have seen in the last section, the shape of a hyperbolic ideal tetrahedron is completely determined by its dihedral angles. The notion of an angle structure is that these dihedral angles should fit together coherently.

\begin{define}\label{def:angled-polytope}
An \emph{angle structure} on an ideal triangulation $\tau$ is an assignment of an (internal) dihedral angle to each edge of each tetrahedron, such that opposite edges carry the same dihedral angle, and such that
\begin{enumerate}
\item all angles lie in the range $(0, \pi)$,
\item around each ideal vertex of a tetrahedron, the dihedral angles sum to $\pi$,
\item around each edge of $M$, the dihedral angles sum to $2\pi$.
\end{enumerate}
The set of all angle structures on $\tau$ is denoted $\ang(\tau)$.
\end{define}

Conditions $(1)$ and $(2)$ in the definition above are precisely 
%
%[[FG: replace]]
%what's 
what is
needed to specify a non-degenerate hyperbolic ideal tetrahedron up to isometry. For concreteness, if the three dihedral angles meeting at a vertex of $T$ are labeled $\alpha, \beta, \gamma$ in clockwise order, then the shape parameter corresponding to $\alpha$ is
\begin{equation}\label{eq:angle-shape}
z(\alpha) =  \frac{\sin \gamma}{\sin \beta}\, e^{i \alpha},
\end{equation}
by the law of sines. Meanwhile, condition $(3)$ in the definition is nothing other than the imaginary part of the edge equation $(\ref{eq:edge-gluing})$. 

The notion of an angle structure can be summarized by saying that the tetrahedra of $\tau$ carry genuine hyperbolic shapes, but the conditions these shapes must satisfy are much weaker than the gluing equations. The completeness equations are discarded entirely\protect\footnote{There is an alternative version of an angle structure, in which the angles must also satisfy the imaginary part of the completeness equations. This corresponds to taking a linear slice of the polytope $\ang(\tau)$. We will not need this version for the main part of the paper --- but see Section \ref{sec:filling} for variations on this theme.}, and the real part of the edge equations is also discarded. If we attempt to glue the metric tetrahedra coming from an angle structure, we can encounter \emph{shearing singularities} at the edges of $\tau$, as in Figure \ref{fig:shearing}.

One may separate the gluing equations of Definition \ref{def:gluing} into a real part and an imaginary part. The real part of the equations is non-linear, because the shape parameters  within one tetrahedron are related in a non-linear way  in equation (\ref{eq:shape-param}). On the other hand, the imaginary part of the equations is linear in the dihedral angles, and 
the set of angle structures $\ang(\tau)$
is defined by a system of linear equations and strict linear inequalities. This leads to

\begin{prop}\label{prop:polytope}
Let $\tau$ be an ideal triangulation of $M$, containing $n$ tetrahedra. The set of all ways to assign a real number to each pair of opposite edges of each tetrahedron is naturally identified with $\RR^{3n}$. Then $\ang(\tau)$ is a convex, finite--sided, bounded polytope in $\RR^{3n}$. If $\ang(\tau) \neq \emptyset$, its dimension is 
$$\dim \ang(\tau) = \abs{\tau} + \abs{\bdy M},$$
where $\abs{\tau} = n$ is the number of tetrahedra and $\abs{\bdy M}$ is the number of boundary tori in $M$.
\end{prop}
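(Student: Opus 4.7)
The convexity, boundedness, and finite-sidedness follow immediately from Definition \ref{def:angled-polytope}: conditions (2) and (3) are finitely many affine-linear equalities on $\RR^{3n}$, while condition (1) restricts the solution set to the open box $(0,\pi)^{3n}$. Letting $W \subset \RR^{3n}$ denote the affine subspace cut out by (2) and (3), one has $\ang(\tau) = W \cap (0,\pi)^{3n}$, an open subset of $W$. If $\ang(\tau) \neq \emptyset$, its dimension equals $\dim W$, so it suffices to compute $\dim W$.

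First, condition (2) actually yields only $n$ independent equations: at each of the four ideal vertices of a fixed tet $T$, the three edges meeting there pick out one representative from each opposite pair, so all four vertex equations collapse to the single relation $\alpha_T + \beta_T + \gamma_T = \pi$. Second, I would count the edges of $\tau$ using the boundary triangulation of $\bdy M$: it has $F_{\bdy M} = 4n$ triangles and $E_{\bdy M} = 6n$ edges, so $\chi(\bdy M) = 0$ forces $V_{\bdy M} = 2n$; since each edge of $\tau$ contributes two distinct vertices to $V_{\bdy M}$ (one at each end), we get $E = n$, and condition (3) contributes a further $n$ equations. The defining system thus consists of $2n$ equations in $3n$ unknowns, and the proof reduces to showing its rank equals $2n - \abs{\bdy M}$, i.e., the space of linear dependencies has dimension $\abs{\bdy M}$.

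For each boundary torus $C$, I would produce one dependency via a discrete Gauss--Bonnet identity. Let $n_C(T)$ be the number of ideal vertices of $T$ on $C$, and $b^C_e$ the number of endpoints of the edge $e$ on $C$. The claim is that
\[
\sum_T n_C(T)\, \bigl((2)_T\bigr) \;\equiv\; \sum_e b^C_e\, \bigl((3)_e\bigr)
\]
as affine functionals on $\RR^{3n}$. The linear parts match because the coefficient of $\alpha_T$ on each side equals $n_C(T)$: on the left by inspection, and on the right because the two edges of any opposite pair in $T$ together cover all four vertices of $T$, giving $b^C_{e_1} + b^C_{e_2} = n_C(T)$. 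The constants match via $\pi F_C = 2\pi V_C$, i.e., $\chi(C) = 0$. For linear independence, a trivial combination $\sum_C \lambda_C (\mathrm{dep}_C) = 0$ must satisfy $\lambda_{C(v)} + \lambda_{C(v')} = 0$ for every edge $vv'$ of every tet together with $\sum_{v \in T}\lambda_{C(v)} = 0$; a brief calculation in any one tetrahedron then forces $\lambda \equiv 0$ on every torus.

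The main obstacle is the reverse inclusion: every dependency has the above form. Such a dependency corresponds to a function $b : E(\tau) \to \RR$ whose opposite-pair sum $c_T := b(e_1) + b(e_2)$ depends only on $T$. Given such $b$, I would define $\mu_{T,v} := (S_{T,v} - c_T)/2$, where $S_{T,v}$ is the sum of $b$ on the three edges of $T$ meeting $v$, and verify using the pair-sum condition that $b(e) = \mu_{T,v_1} + \mu_{T,v_2}$ whenever $e$ is realized in $T$ from $v_1$ to $v_2$. The heart of the argument is to show $\mu_{T,v}$ depends only on the torus containing $v$. If two tetrahedra $T_1, T_2$ share a face $f$ at an ideal vertex $p$ on some torus $C$, then $S_{T_1,p}$ and $S_{T_2,p}$ agree on the two edges of $f$ at $p$, and each has a single remaining summand coming from the edge of $T_i$ at $p$ not lying on $f$. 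Those two extra edges have a \emph{common} opposite -- the edge of $f$ opposite $p$ -- so the pair-sum condition in $T_1$ and $T_2$ forces $\mu_{T_1,p} = \mu_{T_2,p}$. Connectedness of the boundary triangulation of $C$ then propagates this to a single value $\lambda_C$ per torus, exhibiting $b$ as a linear combination of the Gauss--Bonnet dependencies. Therefore the dependency space has dimension exactly $\abs{\bdy M}$, and $\dim W = 3n - (2n - \abs{\bdy M}) = n + \abs{\bdy M}$.
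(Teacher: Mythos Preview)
Your proof is correct and follows essentially the same strategy as the paper's: both reduce to computing the row null space of the $2n \times 3n$ coefficient matrix, produce one dependency per cusp (your Gauss--Bonnet identity is exactly the paper's vector $\rr_c$), and show these span by reconstructing a cusp-labeled function from an arbitrary dependency. Your symmetric formula $\mu_{T,v} = (S_{T,v} - c_T)/2$ together with the face-sharing propagation step is a mild repackaging of the paper's triangle-based definition of $\lambda_c$ and its within-tetrahedron consistency check; the underlying computations coincide.
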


\begin{proof}
Condition (1) of Definition \ref{def:angled-polytope} is a system of strict inequalities that constrains the coordinates of $\ang(\tau)$ to the open cube $(0, \pi)^{3n}$. Meanwhile, conditions (2) and (3) impose a system of linear equations, whose solution set is an affine subspace of $\RR^{3n}$. The intersection between this affine subspace and $(0, \pi)^{3n}$ will be a bounded, convex, finite--sided polytope.

We claim that the number of edges in the triangulation $\tau$ is $n$, the same as the tetrahedra. To see this, observe that each tetrahedron gives rise to $4$ boundary triangles that lie in $\bdy M$. Thus $\bdy M$ is subdivided into $4n$ boundary triangles. These triangles have a total of $12n$ sides, glued in pairs; thus $\bdy M$ has $6n$ edges. Since every component of $\bdy M$ is a torus,
$$\chi(\bdy M) = 0 = 4n - 6n + 2n,$$
hence there are $2n$ vertices on $\bdy M$. Since every edge of $\tau$ accounts for two vertices on $\bdy M$, the number of edges in $\tau$ is $n$, as claimed. It will be convenient to think of the tetrahedra as numbered $1$ to $n$, and the edges as numbered $n+1$ to $2n$.

Definition \ref{def:angled-polytope} involves $2n$ equations and $3n$ unknowns. This system can be encoded in a $(2n \times 3n)$ matrix $A$, as follows. Each column of $A$ corresponds to a pair of opposite edges of one tetrahedron. For $1 \leq i \leq n$, the entry $a_{ij}$ records whether or not the $i^{\thh}$ tetrahedron contains the $j^{\thh}$ edge pair. In other words, $a_{ij}$ will be $1$ when $3i -2 \leq j \leq 3i$, and $0$ otherwise. For $n+1 \leq i \leq 2n$, the entry $a_{ij}$ records how many edges out of the $j^{\thh}$ edge pair become identified to the $i^{\thh}$ edge of the glued-up manifold. Thus the entries in the bottom half of $A$ can be $0$, $1$, or $2$. With this setup, the system of equations is
$$A\, \vv = [\pi, \, \ldots, \pi, 2\pi, \ldots, 2\pi]^T,$$
where $\vv \in \RR^{3n}$ is the vector of  angles. Then $\dim \ang(\tau) = 3n - \rank (A)$. 

\begin{lemma}\label{lemma:rank}
$\rank(A) = 2n -  \abs{\bdy M}$.
\end{lemma}

This result is due to Neumann \cite{neumann:triangulations}. Our proof is adapted from  Choi \cite[Theorem 3.7]{choi:triangulation}.

\begin{proof}
Since the matrix $A$ has $2n$ rows, it suffices to show that the row null space has dimension $\abs{\bdy M}$. We will do this by constructing an explicit basis for the row null space, with basis vectors in $1-1$ correspondence with the cusps of $M$.

Let $c$ be a cusp of $M$. Associated to $c$, we construct a row vector $\mathbf{r}_c \in \RR^{2n}$. For $1 \leq i \leq n$, the $i^\thh$ entry of $\rr_c$ is \emph{minus} the number of ideal vertices that the $i^\thh$ tetrahedron has at cusp $c$. For $n+1 \leq i \leq 2n$, the $i^\thh$ entry of $\rr_c$ is \emph{plus} the number of endpoints that the $i^\thh$ edge has at cusp $c$. Thus $\rr_c$ records incidence, with tetrahedra counted negatively and edges counted positively.

\smallskip

\noindent \emph{ \underline{Claim:} For every cusp $c$, $\rr_c \, A = 0$}.

\smallskip

Let $\vv_j$ be a column of $A$, and recall that $\vv_j$ corresponds to one pair of opposite edges in one tetrahedron. Then the dot product $\rr_c \cdot \vv_j$ counts \emph{minus} the number of times that one of these edges has an endpoint at $c$, \emph{plus} the number of times that one of these edges is identified to an edge of $M$ that has an endpoint at $c$. Naturally the sum is $0$.

\smallskip

\noindent \emph{\underline{Claim:} The collection of vectors $\rr_c $ is linearly independent.}

\smallskip

Suppose that $\sum \lambda_c \rr_c = 0$, and let $e_i$ be the $i^\thh$ edge in $M$. The $i^\thh$ entry of $\rr_c$ will be $0$, except when $c$ is one of the endpoints of $e_i$. Thus, if the edge $e_i$ has endpoints at cusps $a$ and $b$, and the $i^\thh$ entry of $\sum \lambda_c \rr_c$ is $0$, we must have $\lambda_a + \lambda_b = 0$.

Now, let $\Delta$ be an ideal triangle of $\tau$, whose ideal vertices are at cusps $a$, $b$, and $c$. The argument above, applied to the three edges of $\Delta$, gives
$$\lambda_a + \lambda_b = 0, \qquad \lambda_a + \lambda_c = 0, \qquad \lambda_b + \lambda_c = 0.$$
The only way that these three equalities can hold simultaneously is if $\lambda_a = \lambda_b = \lambda_c = 0$. Thus all coefficients $\lambda_c$ must vanish.
\smallskip

\noindent \emph{\underline{Claim:} The collection of vectors $\rr_c $ spans the row null space of $A$.}

\smallskip

Let $\qq = [q_1, \ldots, q_{2n}]$ be a vector such that $\qq \, A = 0$. Our goal is to show that $\qq =  \sum \lambda_k \rr_k$.
To that end, let $T_i$, $(1 \leq i \leq n)$, be one tetrahedron of $\tau$. Let $a,b,c,d$ be the cusps of $M$ corresponding to the four ideal vertices of $T_i$. (Some of these cusps may coincide.) A pair of letters from the collection $\{a,b,c,d\}$ determines an edge of $T_i$, which is identified to some edge of $M$. For notational convenience, suppose that edge $ab$ from tetrahedron $T_i$ is identified to the edge $i(ab)$ in $M$, and similarly for the other letters.

Now, let $\vv_j$ be the $j^\thh$ column vector of $A$. This column of $A$ corresponds to a pair of opposite edges in one tetrahedron, which will be tetrahedron $T_i$ if and only if $3i -2 \leq j \leq 3i$. After relabeling $\{a,b,c,d\}$, we may assume that $j = 3i -2$ corresponds to opposite edges $i(ab)$ and $i(cd)$, \quad  $j = 3i -1$ corresponds to $i(ac)$ and $i(bd)$, \quad and $j = 3i$ corresponds to $i(ad)$ and $i(bc)$. By the definition of the matrix $A$, the only non-zero entries of $\vv_{3i -2}$ are in row $i$ (corresponding to the tetrahedron $T_i$) and rows $i(ab), \:  i(cd)$ (corresponding to edges of $M$). The analogous statement holds for $j = 3i -1$ and $j = 3i$.

Since $\qq \, A = 0$, we must have $\qq \cdot \vv_j = 0$ for every $j$. Applying this to $3i -2 \leq j \leq 3i$ gives
$$
\begin{array}{r c c c l}
\qq \cdot \vv_{3i-2} &=& q_i + q_{i(ab)} + q_{i(cd)} &=& 0, \\ 
\qq \cdot \vv_{3i-1} &=& q_i + q_{i(ac)} + q_{i(bd)} &=& 0, \\
\qq \cdot \vv_{3i} &=& q_i + q_{i(ad)} + q_{i(bc)} &=& 0, 
\end{array}
$$
which implies
\begin{equation}\label{eq:q-depend}
q_{i(ab)} + q_{i(cd)} \: \: = \: \:
 q_{i(ac)} + q_{i(bd)} \: \: = \: \:
q_{i(ad)} + q_{i(bc)} \: \: = \: \:
- q_i.
\end{equation}

We are now ready to find coefficients $\lambda_k$ such that $\qq =  \sum \lambda_k \rr_k$. Let $\Delta$ be an ideal triangle in $T_i$, whose ideal vertices are at cusps $a,b,c$ and whose sides are at edges $i(ab), \:  i(ac), \:  i(bc)$. Define
\begin{equation}\label{eq:lambda-def}
\lambda_a = \frac{ q_{i(ab)} \! + \!  q_{i(ac)} \! - \! q_{i(bc)} }{2}, \quad
\lambda_b = \frac{ q_{i(ab)} \! + \!  q_{i(bc)} \! - \!  q_{i(ac)} }{2}, \quad
\lambda_c = \frac{ q_{i(ac)} \! + \!  q_{i(bc)} \! - \!  q_{i(ab)} }{2}.
\end{equation}
A priori, this definition depends on the ideal triangle $\Delta$. To check this is well-defined, let $\Delta'$ be another ideal triangle of $T_i$, for example the triangle with ideal vertices at $b,c,d$. Note that equation (\ref{eq:q-depend}) implies that
$$q_{i(cd)} - q_{i(bd)} = q_{i(ac)} - q_{i(ab)}.$$
Thus the definition of $\lambda_c$ coming from $\Delta'$ will be
$$\lambda_c \: = \: \frac{ q_{i(bc)} + q_{i(cd)} - q_{i(bd)} }{2} \: = \: \frac{ q_{i(bc)} + q_{i(ac)} - q_{i(ab)} }{2}  .$$
Therefore, triangles $\Delta$ and $\Delta'$ that belong to the same tetrahedron and share a vertex at cusp $c$ induce the same definition of $\lambda_c$. Since any pair of ideal triangles that meet the cusp $c$ are connected by a sequence of tetrahedra, it follows that $\lambda_c$ is well-defined.

It remains to check that this definition of $\lambda_c$ satisfies $\qq =  \sum \lambda_k \rr_k$. Recall that for $n+1 \leq i \leq 2n$, the 
$i^\thh$ entry of $\rr_k$ is the number of endpoints that the $i^\thh$ edge has at cusp $k$. If the ends of this edge are at cusps $a$ and $b$, i.e.\ if this is the edge we have referred to as $i(ab)$, then the $i^\thh$ entry of $\sum \lambda_k \rr_k$ is
\begin{equation}\label{eq:lambda-sum}
\lambda_a + \lambda_b  = q_{i(ab)},
\end{equation}
using equation (\ref{eq:lambda-def}). This is exactly the $i^\thh$ entry of $\qq$. Similarly, for $1 \leq i \leq n$, the $i^\thh$ entry of $\rr_k$ is \emph{minus} the number of ideal vertices that $T_i$ has at cusp $k$. Thus, if $T_i$ has ideal vertices at cusps  $a,b,c,d$ (where some of these may coincide), the $i^\thh$ entry of $\sum \lambda_k \rr_k$ is
$$
-(\lambda_a + \lambda_b + \lambda_c + \lambda_d) \: = \: - q_{i(ab)} - q_{i(cd)} \: = \: q_i,$$
using equations (\ref{eq:lambda-sum}) and (\ref{eq:q-depend}).  Thus $\qq =  \sum \lambda_k \rr_k$, as desired. 

This completes the proof that the vectors $\rr_c$, indexed by the cusps of $M$, form a basis for the row null space of $A$. Therefore, $\rank(A) = 2n -  \abs{\bdy M}$
\end{proof}

We conclude that $\dim \ang(\tau) = 3n - \rank (A) =  n + \abs{\bdy M}$, completing the proof of Proposition \ref{prop:polytope}.
\end{proof}

\section{Leading--trailing deformations}\label{sec:lead-trail}

Given a point $p$ of the angle polytope $\ang(\tau)$, let  $T_p \ang(\tau)$ be  the tangent space to $\ang(\tau)$ at the point $p$. There is a particularly convenient choice of spanning vectors for $T_p \ang(\tau)$. These \emph{leading--trailing deformations}, which were probably folklore knowledge to experts for several years, are the main innovation in our direct proof of Theorem \ref{thm:volume-max}. In a certain sense, they serve as a more concrete reformulation of Neumann and Zagier's symplectic pairing \cite{neumann-zagier}. To the best of the authors' knowledge, this is the first place where they are described in print.

\begin{define}\label{def:lead-trail}
Let $C$ be a cusp torus of $M$, with a tessellation by boundary triangles coming from $\tau$. Let $\sigma \subset C$ be an oriented normal closed curve (see Definition \ref{def:normal}), consisting of segments $\sigma_1, \ldots \sigma_k$. Every oriented segment $\sigma_i$ lies in a boundary triangle $\Delta_i$. We define  the \emph{leading} corner of $\Delta_i$ to be the corner opposite the side where $\sigma_i$ enters $\Delta_i$, and mark it with a $+$. We define  the \emph{trailing} corner of $\Delta_i$ to be the corner opposite the side where $\sigma_i$ leaves $\Delta_i$, and mark it with a $\ominus$. See Figure \ref{fig:lead-trail}.

\begin{figure}[h]
\begin{center}
\psfrag{p}{$+$}
\psfrag{m}{$\ominus$}
\includegraphics{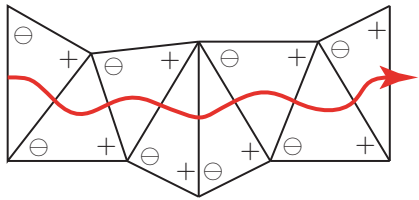}
\caption{The leading--trailing deformation along $\sigma$ increases the angles marked $+$ and decreases the angles marked $\ominus$.}
\label{fig:lead-trail}
\end{center}
\end{figure}

For every oriented segment $\sigma_i$, we define a vector $\ww(\sigma_i) \in \RR^{3n}$, where as above each coordinate of $\RR^{3n}$  corresponds to one pair of opposite edges in one of the $n$ tetrahedra. Every corner of $\Delta_i$ corresponds to one such edge pair in a tetrahedron. The vector $\ww(\sigma_i)$ will have a $1$ in the coordinate corresponding to the leading corner of $\Delta_i$, a $-1$ in the coordinate corresponding to the trailing corner of $\Delta_i$, and $0$'s otherwise.

Finally, the \emph{leading-trailing deformation} corresponding to $\sigma$ is the vector $\ww(\sigma) = \sum_i \ww(\sigma_i)$.
\end{define}

\begin{example}\label{ex:leading-edge}
% The simplest example of a leading--trailing deformation is the following. 
Let $e$ be an edge of $M$, and suppose for simplicity that all tetrahedra adjacent to $e$ are distinct. Let $\sigma$ be a simple closed curve on $\bdy M$, running counterclockwise about one endpoint of $e$. The boundary triangles intersected by $\sigma$ have angles $\alpha_i, \beta_i, \gamma_i$, labeled clockwise with $\alpha_i$ inside $\sigma$. Then the leading--trailing deformation $\ww(\sigma)$ adds $1$ to every $\beta_i$ and subtracts $1$ from every $\gamma_i$, keeping the angle sum in each tetrahedron equal to $\pi$. In addition, the dihedral angle marked $\beta_i$ is adjacent to the same edge of $M$ as the dihedral angle marked $\gamma_{i+1}$. Thus the angle sum at each edge is unchanged. (See Figure \ref{fig:tet-neighborhood}.) Observe that $\ww(\sigma)$ has no effect at all on the dihedral angles $\alpha_i$ adjacent to $e$.
\end{example}

\begin{figure}[h]
\begin{center}
\psfrag{a}{$\alpha_1$}
\psfrag{b}{$\alpha_2$}
\psfrag{c}{$\alpha_3$}
\psfrag{d}{$\alpha_4$}
\psfrag{e}{$\alpha_5$}
\psfrag{f}{$e$}
\psfrag{p}{$+$}
\psfrag{m}{$\ominus$}
\includegraphics[width=5in]{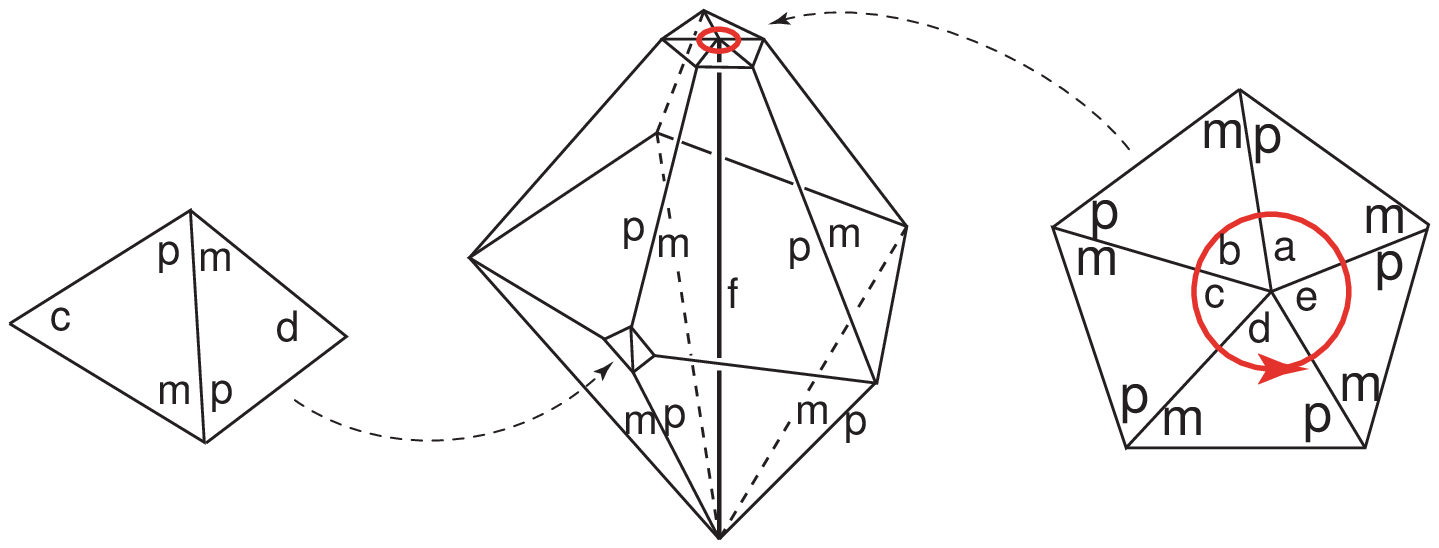}
\caption{The leading--trailing deformation about a single edge $e$ of $M$. Every increase to an angle in a tetrahedron is canceled out by a decrease to an adjacent angle.}
\label{fig:tet-neighborhood}
\end{center}
\end{figure}

In the more general setting, we will prove in Lemma \ref{lemma:tangent} that leading--trailing deformations are always tangent to $\ang(\tau)$. To do that, we need a better understanding of the interaction between different deformations.

%	Our main goal in the remainder of this section is to prove that leading--trailing deformations span the tangent space $T_p \ang(\tau)$. To that end, we study their effect on one another.

\begin{define}\label{def:int-num}
Let $\tau$ be an ideal triangulation of $M$, and let $\rho, \sigma$ be oriented normal closed curves on $\bdy M$ that intersect transversely (if at all). Define the \emph{signed intersection number} $\iota(\rho, \sigma)$ to be the number of times that $\sigma$ crosses $\rho$ from right to left, minus the number of times that $\sigma$ crosses $\rho$ from left to right. This definition has a few immediate properties:
\begin{itemize}
\item It is anti-symmetric: $\iota(\rho, \sigma) = -\iota(\sigma, \rho)$.
\item It depends only on the homology classes of $\sigma$ and $\rho$ in $H_1(\bdy M)$.
\item By considering a transverse pushoff of $\sigma$, one obtains $\iota(\sigma, \sigma) = 0$.
\end{itemize}
\end{define}

% The following lemma helps clarify the dependencies of different deformations.

\begin{lemma}\label{lemma:deformation-crossing}
Let $\rho, \sigma$ be oriented normal closed curves on $\bdy M$ that intersect transversely, if at all. Then
$$\frac{\partial}{\partial \ww(\sigma)} \Imm(H(\rho)) = 2 \, \iota(\rho, \sigma).$$
\end{lemma}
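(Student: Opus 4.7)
The plan is to verify the identity by a careful local computation, expanding each side and matching contributions from intersections of $\rho$ and $\sigma$.

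First, by the linearity of $\Imm H(\rho)$ in the angle coordinates and the decomposition $\ww(\sigma) = \sum_i \ww(\sigma_i)$, I would reduce to analyzing a single segment:
$$\frac{\partial \Imm H(\rho)}{\partial \ww(\sigma)} \;=\; \sum_i \frac{\partial \Imm H(\rho)}{\partial \ww(\sigma_i)} .$$
Fix a segment $\sigma_i$ in a boundary triangle $\Delta$ at an ideal vertex of a tetrahedron $T$. By Definition \ref{def:lead-trail}, $\ww(\sigma_i)$ has coordinate $+1$ on the edge pair $\ell_i$ of the leading corner, $-1$ on the edge pair $t_i$ of the trailing corner, and zero elsewhere. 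Writing $\Imm H(\rho) = \sum_j \epsilon^\rho_j \, \alpha_{c(\rho_j)}$, where $c(\rho_j)$ is the edge-pair coordinate of the corner cut off by $\rho_j$, I get
$$\frac{\partial \Imm H(\rho)}{\partial \ww(\sigma_i)} \;=\; \sum_{j:\, c(\rho_j)=\ell_i} \epsilon^\rho_j \;-\; \sum_{j:\, c(\rho_j)=t_i} \epsilon^\rho_j .$$
Since an edge pair in $T$ has four corners (one at each ideal vertex of $T$), each sum ranges over all segments $\rho_j$ anywhere in $\bdy M$ whose cut-off corner lies in one of those four positions.

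Next, I would split each sum into a \emph{local} piece, where $\rho_j$ lies in the same triangle $\Delta$ as $\sigma_i$, and a \emph{shadow} piece, where $\rho_j$ lies in one of the three other boundary triangles of $T$. For the local piece, a short case analysis on the three combinatorial types of normal segment in a triangle shows that the only nonzero contributions come from pairs $(\sigma_i, \rho_j)$ that share exactly one edge of $\Delta$ and actually cross; each such crossing contributes $\pm 1$ with sign equal to the orientation of the crossing in $\iota(\rho,\sigma)$. Summing over all triangles recovers one full copy of $\iota(\rho,\sigma)$.

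The shadow piece is the subtle part, and is where the factor of $2$ enters. Perturbing the dihedral angle at $\ell_i$ simultaneously changes the three ``shadow'' corners of that edge pair in the other boundary triangles of $T$; one must account for every $\rho_j$ whose cut-off lands on one of these shadow corners and sum the signed contributions. The key point is that the face identifications gluing $T$ to its neighbors force each shadow contribution from a transverse crossing of $\rho$ and $\sigma$ to carry the same sign and magnitude as the corresponding local contribution, so a second copy of $\iota(\rho,\sigma)$ appears.

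The main obstacle will be this shadow bookkeeping: carefully tracking which shadow corner is hit, and verifying that the orientation conventions ($\epsilon^\rho_j = \pm 1$ from Definition \ref{def:holonomy} together with leading/trailing signs) conspire so that the shadow contributions add to rather than cancel the local ones. A conceptual shortcut that bypasses the most painful casework is to note that both sides of the identity are $\ZZ$-bilinear pairings on $H_1(\bdy M)$: the right side by the properties listed after Definition \ref{def:int-num}, the left side because on $\ang(\tau)$ the function $\Imm H(\rho)$ descends to homology modulo the edge relations, and $\ww(\sigma)$ descends to homology modulo the peripheral leading--trailing vectors of Example \ref{ex:leading-edge}, which are annihilated by $\Imm H(\rho)$. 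Thus the identity reduces to a verification on a homology basis of each cusp torus, where one can take explicit representatives of $\rho$ and $\sigma$ crossing in a single triangle and determine the constant $2$ by direct inspection.
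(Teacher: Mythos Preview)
Your decomposition of the factor $2$ is the wrong one. In the paper's proof the shadow contributions vanish identically, and the entire $2\iota(\rho,\sigma)$ comes from the local piece. Concretely: after reducing (by cut--and--rejoin) to the case where each curve meets each boundary triangle at most once, one groups the triangles containing both $\rho$ and $\sigma$ into maximal runs $\Delta_1,\ldots,\Delta_j$. In the interior triangles the two curves are forced to be parallel, so $\rho$ cuts off the unmarked corner and nothing changes. Only the two end triangles $\Delta_1,\Delta_j$ contribute, and each contributes $\pm 1$; a short orientation check shows these add to $+2$, $-2$, or $0$ according to whether $\sigma$ crosses $\rho$ right--to--left, left--to--right, or not at all within that run. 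So the local count is $2\iota$, not $\iota$. Your assertion that ``each crossing contributes $\pm 1$ with sign equal to the orientation of the crossing'' conflates combinatorial incidence (cutting off different corners) with transverse intersection; the contribution $\epsilon^\rho_j\cdot(\pm 1)$ is determined purely by which corner $\rho_j$ cuts off, independent of whether the two arcs actually meet in $\Delta$.

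Correspondingly, your shadow analysis is also off: the paper shows these terms cancel to zero, by grouping the segments of $\sigma$ adjacent to a fixed vertex $v$ into an arc $\sigma_v$, and checking that the induced deformation leaves all external angles of the ``mirror polygon'' $P'_v$ (at the far end of the edge through $v$) unchanged. Your heuristic that ``face identifications force each shadow contribution to carry the same sign as the local one'' is not what happens; the shadows do not see the crossings of $\rho$ and $\sigma$ at all. Finally, the bilinearity shortcut you sketch is circular as written: the claim that edge--encircling deformations $\ww(\rho_e)$ annihilate $\Imm H(\rho)$ is exactly the special case $\sigma=\rho_e$ of the lemma you are proving, and the claim that $\ww(\sigma)-\ww(\sigma')$ lies in the span of the $\ww(\rho_e)$ whenever $\sigma,\sigma'$ are homologous also needs an independent argument.
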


Recall that  $\Imm(H(\rho))$ is the linear, angled part of the holonomy in Definition \ref{def:holonomy}.

\begin{proof} The proof involves three steps. \smallskip

\emph{\underline{Step 1}} introduces  several simplifying assumptions with no loss of generality. 
First, it will help to assume that every tetrahedron $T$ is embedded in $M$ (that is, $T$ does not meet itself along an edge or face). This assumption can always be met by passing to a finite--sheeted cover of $M$, since $\pi_1(M)$ is residually finite by Selberg's lemma. Note that the tetrahedra, angles, boundary curves, holonomies, and leading--trailing deformations all lift naturally to covers. This assumption also implies that boundary triangles $\Delta_i, \Delta_j$ meet in either at most one edge or at most one vertex.

\begin{figure}
\begin{center}
\psfrag{s}{$\sigma$}
\psfrag{sp}{$\sigma'$}
\psfrag{sd}{$\sigma''$}
\psfrag{R}{$\Rightarrow$}
\includegraphics[width=5in]{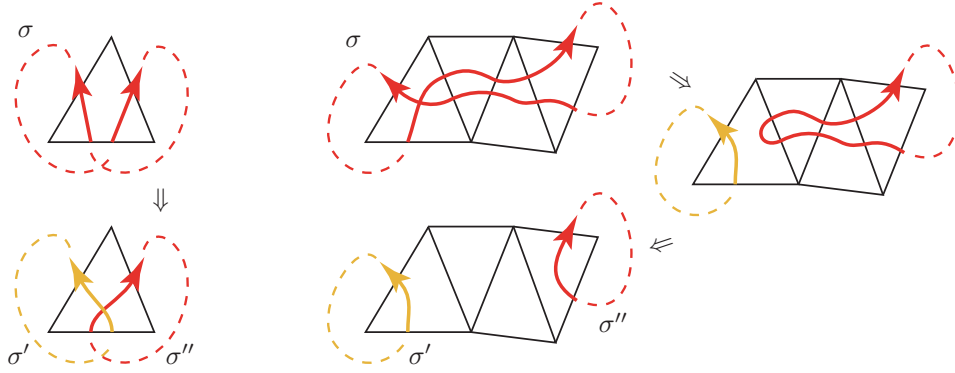}
\caption{Decomposing $\sigma$ into smaller curves. Left: when two segments of $\sigma$ pass through the same boundary triangle, we may cut and rejoin to form two normal curves $\sigma'$, $\sigma''$. Right: a local isotopy in the complement of the vertices may be required to ensure that $\sigma'$, $\sigma''$ are again normal curves.}
\label{fig:cut-join}
\end{center}
\end{figure}

Next, no generality is lost by assuming that $\sigma$ 
passes through each boundary triangle at most once. For,   if two segments $\sigma_i, \sigma_j$ run through the same boundary triangle $\Delta$, we may cut and rejoin the curve $\sigma$ into a pair of shorter normal closed curves $\sigma', \sigma''$. See Figure \ref{fig:cut-join}. (To ensure that $\sigma'$ and $\sigma''$ are \emph{normal} curves, it may be necessary to move them by isotopy in the complement of the vertices, as in the right panel of the figure.) Note that this operation is natural and topologically additive: we have $\ww(\sigma) = \ww(\sigma') + \ww(\sigma'')$, and $\iota(\rho, \sigma) = \iota(\rho, \sigma') + \iota(\rho, \sigma'')$. Thus, if we prove the lemma for each of $\sigma'$ and $\sigma''$, the result will follow for $\sigma$. In particular, we may now assume that $\sigma$ is embedded.

Similarly, we may assume that $\rho$ also passes through each boundary triangle at most once. For, if $\rho_i, \rho_j$ run through the same boundary triangle $\Delta$, we may decompose $\rho$ into a pair of curves $\rho'$, $\rho''$ just as above. This natural operation ensures $H(\rho) = H(\rho') + H(\rho'')$, and $\iota(\rho, \sigma) = \iota(\rho', \sigma) + \iota(\rho'', \sigma)$. Thus it suffices to consider each of $\rho'$ and $\rho''$ in place of $\rho$.

%	For the bulk of the proof, we will work under the assumption that every tetrahedron $T$ is embedded in $M$ (that is, $T$ does not meet itself along an edge or face). This will help simplify the argument. Then, at the end of the proof, we will use covering considerations to explain why no generality is lost in making this assumption.

% \smallskip

Given these simplifying assumptions, suppose that the segments $\sigma_1, \ldots, \sigma_k$ of $\sigma$ are contained in distinct boundary triangles $\Delta_1, \ldots, \Delta_k$. A priori, there are two ways in which $\ww(\sigma)$ could affect the angular component of $H(\rho)$:
\begin{enumerate}
\item A segment of $\rho$ lies in the same boundary triangle $\Delta_i$ that contains $\sigma_i$. Then, $\ww(\sigma_i)$ changes the angles of $\Delta_i$, thereby changing $\Imm(H(\rho))$.

\item A segment of $\rho$ lies in a boundary triangle $\Delta'_i$, where $\Delta_i \neq \Delta'_i$ are two truncated vertices of the same ideal tetrahedron. By changing dihedral angles in the ambient tetrahedron, $\ww(\sigma_i)$ also changes the angles of $\Delta'_i$.
\end{enumerate}
We consider the effect of (2) in Step 2 and the effect of (1) in Step 3.

\smallskip

\emph{\underline{Step 2}} is to show that the effect of $\ww(\sigma)$ in the boundary triangles ``on the other side'' of the ambient tetrahedra, as in (2) above, will never change $\Imm(H(\rho))$. To see why this is true, it helps to group together boundary triangles that share a common vertex $v$.

Let $v$ be a vertex of $\bdy M$, and let $\sigma_v = \sigma_1 \cup  \ldots \cup \sigma_j$ be a maximal union of consecutive segments in $\sigma$, such that the ambient boundary triangles $\Delta_1 \ldots, \Delta_j$ are all adjacent to $v$.  By our embeddedness assumption, each $\Delta_i$ is adjacent to $v$ in only one corner. Recall that $v$ is one endpoint of an edge $e \subset M$; we will investigate the effect of $\ww(\sigma_v)$ on boundary triangles $\Delta'_1, \ldots, \Delta'_j$ at the other end of edge $e$.  The arc  $\sigma_v$ can take one of three forms:
\begin{itemize}
\item[(i)] $\sigma_v = \sigma$, and forms a closed loop about $v$.
\item[(ii)] $\sigma_v$ takes a right turn in $\Delta_1$, followed by a sequence of left turns about $v$, followed by a right turn in $\Delta_j$.
\item[(iii)] $\sigma_v$ takes a left turn in $\Delta_1$, followed by a sequence of right turns about $v$, followed by a left turn in $\Delta_j$.
\end{itemize}

Scenario (i) is depicted in the right panel of Figure \ref{fig:tet-neighborhood}. Here, $v$ is in the interior of a polygon $P_v  = \Delta_1 \cup \ldots \cup \Delta_j$. Notice that none of the external angles along $\bdy P_v$ actually change, even though the shapes of constituent triangles are changing. The same will be true in the boundary polygon $P'_{v}  = \Delta'_1 \cup \ldots \cup \Delta'_j$ at the other end of $e$. Thus, if $\rho$ passes through $P'_{v}$, its holonomy will be unaffected by this change.

\begin{figure}
\begin{center}
\psfrag{a}{$\alpha_1$}
\psfrag{b}{$\alpha_2$}
\psfrag{c}{$\alpha_3$}
\psfrag{d}{$\alpha_4$}
\psfrag{A}{$$}
\psfrag{B}{$$}
\psfrag{C}{$$}
\psfrag{D}{$$}
\psfrag{p}{$+$}
\psfrag{m}{$\ominus$}
\psfrag{v}{$v$}
\psfrag{Pv}{$P_v$}
\psfrag{Pp}{$P'_v$}
\includegraphics[width=5in]{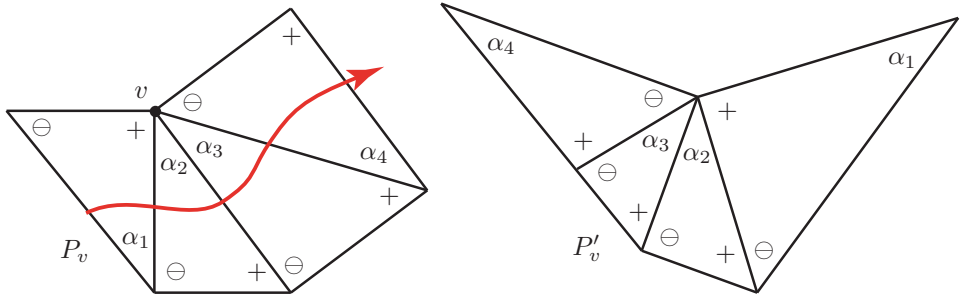}
\caption{Left: the boundary polygon $P_v$ comprised of boundary triangles $\Delta_1, \ldots, \Delta_j$ adjacent to $v$. In each $\Delta_i$, the smallest angle is $\alpha_i$. Right: in the boundary polygon $P'_{v}  = \Delta'_1 \cup \ldots \cup \Delta'_j$, at the other end of the edge that starts at $v$, none of the total angles along $\bdy P'_v$ change.}
\label{fig:both-sides}
\end{center}
\end{figure}

In Scenario (ii), we once again construct a polygon $P_v$ by gluing $\Delta_1$ to $\Delta_2$ along their (unique) shared edge, and so on up to $\Delta_j$.  Let $P'_{v}  = \Delta'_1 \cup \ldots \cup \Delta'_j$ be the polygon formed by the boundary triangles in the same tetrahedra, on the other end of $e$. These two polygons are depicted in Figure \ref{fig:both-sides}. Notice that although $\Delta_i$ and $\Delta'_i$ are in the same oriented similarity class (both have the same angles $\alpha_i, \beta_i, \gamma_i$, in clockwise order), the triangles in these similarity classes are rearranged to form $P'_v$. In particular, $\ww(\sigma_v)$ does not change any of the external angles along $\bdy P'_v$, even though the shapes of constituent triangles $\Delta'_i$ are changing. Thus, if $\rho$ passes through $P'_{v}$, its holonomy will be unaffected.

Scenario (iii) is the mirror image of (ii). Once again, $\ww(\sigma_v)$ does not change any of the external angles along $\bdy P'_v$.

We conclude that for every vertex $v$ in a boundary triangle visited by $\sigma$, the polygon $P'_{v}$ on the other end of the same edge will have all its external angles unaffected by $\ww(\sigma_v)$. Since each segment $\sigma_i$ of $\sigma$ belongs to three different maximal arcs $\sigma_v$ (corresponding to the three distinct vertices of $\Delta_i$), we have $\sum_v \ww(\sigma_v) = 3\ww(\sigma)$. Thus, since each $\ww(\sigma_v)$ does not affect the holonomy of $\rho$ ``on the other side'' of the ambient tetrahedra, neither does $\ww(\sigma)$. This completes Step 2.

\smallskip
 
\emph{\underline{Step 3}}  considers the holonomy of $\rho$ in the same boundary triangles that are also visited by $\sigma$. Consider a maximal consecutive string of segments $\sigma_1, \ldots, \sigma_j$, contained in $\Delta_1, \ldots, \Delta_j$, such that consecutive segments $\rho_1, \ldots, \rho_j$ run through the same  boundary triangles. Depending on orientations, we can have either $\rho_1 \subset \Delta_1$ or $\rho_1 \subset \Delta_j$.

In the special case where $\Delta_1 \cup \ldots \cup \Delta_j$ contains all of $\sigma$ \emph{and} all of $\rho$, the two curves cut off exactly the same corners of boundary triangles (possibly in opposite cyclic order). Then, since $\bdy M$ is orientable, we must have $\iota(\rho, \sigma)=0$. In this case, the angles that go into computing the angular holonomy $\Imm(H(\rho))$ are exactly the angles that are unaffected by $\ww(\sigma)$. Thus, in this special case, $ \partial \Imm(H(\rho)) / \partial \ww(\sigma) = 0 = 2 \, \iota(\rho, \sigma)$.

In the general case, we may assume that $\sigma$ and $\rho$ do not run in parallel through $
\Delta_1$ or through $\Delta_j$ (otherwise, the sequence $1, \ldots, j$ is not maximal). As above, it helps to construct a polygon $P$ by gluing $\Delta_1$ to $\Delta_2$ along their (unique) shared edge, and so on up to $\Delta_j$. By Definition \ref{def:lead-trail}, each $\Delta_i$ has one mark of $+$ in the leading corner relative to $\sigma_i$, and one mark of $\ominus$ in the trailing corner. Altogether, the polygon $P$ contains $2j$ markers, with $j$ pluses and $j$ minuses. We consider how many of these markers are to the left and right of $\rho$.

\begin{figure}
\begin{center}
\psfrag{s}{$\sigma$}
\psfrag{r}{$\rho$}
\psfrag{p}{$+$}
\psfrag{m}{$\ominus$}
\psfrag{R}{$\Rightarrow$}
\includegraphics{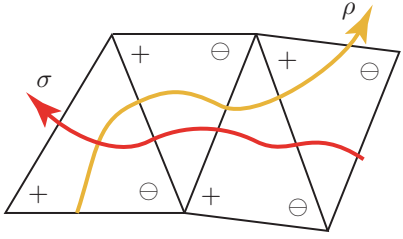}
\caption{When $\sigma$ crosses $\rho$ from right to left, the part of the polygon $P$ to the left of $\rho$ contains an excess of two $+$'s.}
\label{fig:contributions}
\end{center}
\end{figure}

If $\sigma$ enters polygon $P$ to the right of $\rho$ and leaves to the left of $\rho$, the interior of $P$ contributes $+1$ to $\iota(\rho, \sigma)$. Also, the part of $P$ to the left of $\rho$ will have a surplus of two $+$'s. Thus $\ww(\sigma_1 \cup \ldots \cup \sigma_j)$ increases the angular component of $H(\rho)$ by $2$. See Figure \ref{fig:contributions}.

If $\sigma$ enters polygon $P$ to the left of $\rho$ and leaves to the right of $\rho$, the interior of $P$ contributes $-1$ to $\iota(\rho, \sigma)$. Also, the part of $P$ to the left of $\rho$ will have a surplus of two $\ominus$'s. Thus $\ww(\sigma_1 \cup \ldots \cup \sigma_j)$ increases the angular component of $H(\rho)$ by $-2$.

Finally, if $\sigma$ enters and leaves $P$ on the same side of $\rho$, the interior of $P$ contributes $0$ to $\iota(\rho, \sigma)$. Also, the part of $P$ to the left of $\rho$ will have the same number of $+$'s and $\ominus$'s, hence $\ww(\sigma_1 \cup \ldots \cup \sigma_j)$ does not affect the angular component of $H(\rho)$.

Summing these contributions over the (disjoint) polygons of intersection, we conclude that $ \partial \Imm(H(\rho)) / \partial \ww(\sigma) = 2 \, \iota(\rho, \sigma)$, as desired.
 \end{proof}

We can now show that every leading--trailing deformation $\ww(\sigma)$ is tangent to $\ang(\tau)$.

\begin{lemma}\label{lemma:tangent}
Let $p \in \ang(\tau)$ be an angle structure, and let $\sigma$ be an oriented normal curve on a cusp of $M$. Then the vector $\ww(\sigma)$ is tangent to $\ang(\tau)$. In other words, for all sufficiently small $\eps > 0$, \quad  $p + \eps \ww(\sigma) \in \ang(\tau)$.
\end{lemma}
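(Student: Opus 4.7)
The plan is to verify that $\ww(\sigma)$ is annihilated by every linear functional cutting out the affine subspace $\mathcal{L} \subset \RR^{3n}$ defined by conditions $(2)$ and $(3)$ of Definition~\ref{def:angled-polytope}. Since $\ang(\tau) = \mathcal{L} \cap (0,\pi)^{3n}$ and $p$ already lies in the open cube, it will then follow by openness that $p + \eps \ww(\sigma) \in \ang(\tau)$ for all sufficiently small $\eps > 0$. So the proof reduces to checking two things: that $\ww(\sigma)$ preserves each tetrahedron's angle-sum equation, and each edge's angle-sum equation.

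For condition $(2)$, I would argue one segment at a time. Since $\Delta_i$ sits at a single ideal vertex of a tetrahedron $T_i$, its three corners correspond bijectively to the three pairs of opposite edges of $T_i$. The leading and trailing corners are distinct (they are opposite the entry and exit sides of $\sigma_i$, which differ), so $\ww(\sigma_i)$ adds $+1$ to one edge-pair coordinate of $T_i$ and $-1$ to another, leaving the sum of the three coordinates of $T_i$ unchanged; no other tetrahedron is touched. Summing over $i$ shows that $\ww(\sigma)$ preserves condition $(2)$.

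For condition $(3)$, I would reinterpret each edge equation as the imaginary part of a holonomy and then invoke Lemma~\ref{lemma:deformation-crossing}. Given an edge $e$ of $M$, pick one endpoint $v$ of $e$ on $\bdy M$ and let $\sigma_v$ be a small normal loop on the cusp torus that encircles $v$ with $v$ on its left; taking $\sigma_v$ small enough ensures it cuts off exactly one corner at $v$ in each adjacent boundary triangle, so it is genuinely normal. By Definition~\ref{def:holonomy}, $\Imm(H(\sigma_v))$ equals the sum of the dihedral angles at these corners, which in turn equals the sum of dihedral angles of the tetrahedra around $e$. Thus condition $(3)$ at $e$ reads $\Imm(H(\sigma_v)) = 2\pi$, and by Lemma~\ref{lemma:deformation-crossing},
\[ \frac{\partial}{\partial \ww(\sigma)} \Imm(H(\sigma_v)) \;=\; 2\,\iota(\sigma_v, \sigma). \]
Since $\sigma_v$ bounds a small disk in the cusp torus it is null-homologous, so $\iota(\sigma_v, \sigma) = 0$ for every closed normal curve $\sigma$; thus $\ww(\sigma)$ preserves condition $(3)$ as well. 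The conceptual heart of the argument is the holonomy reinterpretation of the edge equations, but the genuine work there is already packaged in Lemma~\ref{lemma:deformation-crossing}, so no further obstacle arises.
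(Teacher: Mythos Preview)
Your proof is correct and follows essentially the same approach as the paper's own proof: handle the open condition $(1)$ by openness of the cube, verify condition $(2)$ segment-by-segment using the $+1/-1$ structure of each $\ww(\sigma_i)$, and reduce condition $(3)$ to Lemma~\ref{lemma:deformation-crossing} by encircling an endpoint of each edge with a null-homologous normal loop. The only cosmetic difference is that you phrase the goal as showing $\ww(\sigma)$ lies in the tangent space of the affine subspace $\mathcal{L}$, whereas the paper checks the three conditions of Definition~\ref{def:angled-polytope} directly; the content is identical.
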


\begin{proof}
Let us check the conditions of Definition \ref{def:angled-polytope}. By condition (1) of the definition, $p$ lies in the open set $(0, \pi)^{3n}$. Thus, for sufficiently small $\eps$, we have $p + \eps \ww(\sigma) \in (0, \pi)^{3n}$ also.

Next, observe that in Definition \ref{def:lead-trail}, every vector $\ww(\sigma_i)$ only affects a single tetrahedron. In this tetrahedron,  $\eps \ww(\sigma_i)$ adds $\eps$ to the dihedral angle on one (leading) pair of opposite edges, and adds $-\eps$ to the dihedral angle on another (trailing) pair of opposite edges. Thus the deformation of angles coming from $\ww(\sigma_i)$ keeps the angle sum equal to $\pi$ in every tetrahedron. Clearly, the same is true for $\ww(\sigma) = \sum_i \ww(\sigma_i)$.

Finally, we check condition (3) of Definition \ref{def:angled-polytope}. Let $e$ be an edge of $M$, and let $\rho$ be a normal closed curve on $\bdy M$ that encircles one endpoint of $e$. Because $\rho$ is homotopically trivial, we have  $\iota(\rho, \sigma) = 0$. Thus, by Lemma \ref{lemma:deformation-crossing}, deforming the dihedral angles along $\ww(\sigma)$ does not change the imaginary part of $H(\rho)$. But $\Imm(H(\rho))$ is nothing other than the sum of dihedral angles about edge $e$; this sum stays constant, equal to $2\pi$.
\end{proof}

There is a convenient choice of leading--trailing deformations that will span $T_p \ang(\tau)$. 
% We proceed as follows. 

\begin{prop}\label{prop:tangent-span}
For every edge $e_i$ of $M$, where $1 \leq i \leq n$, choose a normal closed curve $\rho_i$ about one endpoint 
of $e_i$. In addition, if $M$ has $k$ cusps, choose simple closed normal curves $\sigma_1, \ldots, \sigma_{2k}$ that will span $H_1(\bdy M)$. Then the vectors $\ww(\rho_i)$ and $\ww(\sigma_j)$ span the tangent space $T_p \ang(\tau)$.
\end{prop}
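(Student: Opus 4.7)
The plan is a dimension count: by Proposition~\ref{prop:polytope}, $\dim T_p \ang(\tau) = n+k$, while we propose $n+2k$ candidate vectors, all of which are tangent by Lemma~\ref{lemma:tangent}. So the task reduces to showing that exactly $k$ linear relations hold among them. I would organize the argument around the linear map on $T_p\ang(\tau)$ given by the imaginary holonomies of the chosen homology basis $\{\sigma_j\}$, using Lemma~\ref{lemma:deformation-crossing} as the main pairing.

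Concretely, introduce the linear map $\Phi \colon T_p \ang(\tau) \to \RR^{2k}$ defined by $\Phi(\vv) = \bigl(\partial_\vv \Imm H(\sigma_j)\bigr)_{j=1}^{2k}$. By Lemma~\ref{lemma:deformation-crossing}, $\Phi(\ww(\sigma_\ell)) = \bigl(2\,\iota(\sigma_j, \sigma_\ell)\bigr)_j$, and since $\bdy M$ is a disjoint union of tori with $\{\sigma_j\}$ a basis of $H_1(\bdy M)$, this intersection matrix is block-symplectic and hence invertible. Consequently $\Phi$ is surjective with $\dim \ker \Phi = n - k$, the $2k$ vectors $\{\ww(\sigma_j)\}$ are linearly independent, and $\Phi$ restricts to an isomorphism from $\mathrm{span}\{\ww(\sigma_j)\}$ onto $\RR^{2k}$. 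On the other hand, each $\rho_i$ is null-homologous on $\bdy M$, so Lemma~\ref{lemma:deformation-crossing} gives $\Phi(\ww(\rho_i)) = 0$. Thus $\mathrm{span}\{\ww(\rho_i)\} \subseteq \ker \Phi$ and meets $\mathrm{span}\{\ww(\sigma_j)\}$ trivially, and the proposition reduces to a single remaining claim: the $n$ vectors $\{\ww(\rho_i)\}$ span all of the $(n-k)$-dimensional kernel $\ker \Phi$.

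This last step is the main obstacle. Since the $\ww(\rho_i)$'s already lie in an $(n-k)$-dimensional ambient space, their span has dimension at most $n-k$, and one must show that this upper bound is achieved, i.e.\ that there are exactly $k$ independent linear relations among them. I expect these $k$ relations, one per cusp, to arise as follows: for each cusp $c$, assembling small vertex-encircling loops around every vertex of $c$'s induced triangulation yields a null-homotopic closed curve whose leading--trailing deformation can be computed directly, and the cancellation pattern from Step~2 of the proof of Lemma~\ref{lemma:deformation-crossing} should express a nontrivial combination of vertex-encircling deformations at $c$ as zero. Since the chosen $\rho_i$ encircles only one endpoint of $e_i$, converting this into a relation among the $\{\ww(\rho_i)\}$ may require rewriting alternate vertex-encirclers in terms of chosen ones, perhaps modulo $\mathrm{span}\{\ww(\sigma_j)\}$. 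To verify that no further relations exist, I would dualize: suppose $\sum a_i \ww(\rho_i) = 0$ and run a cusp-by-cusp analysis parallel to the linear independence argument for the vectors $\rr_c$ in the proof of Lemma~\ref{lemma:rank}, forcing the coefficient vector $\mathbf{a}$ to lie in a $k$-dimensional space indexed by the cusps of $M$.
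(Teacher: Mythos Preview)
Your approach is essentially the paper's: your map $\Phi$ packages the same use of Lemma~\ref{lemma:deformation-crossing} to separate the $\ww(\sigma_j)$'s from $\mathrm{span}\{\ww(\rho_i)\}$, and both proofs then reduce to showing $\dim\mathrm{span}\{\ww(\rho_i)\} = n-k$. For that last step, however, the paper bypasses your geometric detour through assembling vertex loops and rewriting alternate encirclers: it simply forms the $n\times 3n$ matrix $B$ with rows $\ww(\rho_i)$ and proves $\rank(B)=n-k$ by exhibiting the vectors $\rr_c$ (whose $i$-th entry is the number of endpoints of $e_i$ at cusp $c$) as a basis for the row null space of $B$, via a near-verbatim repeat of the argument in Lemma~\ref{lemma:rank}---this single computation handles both the existence of the $k$ cusp relations and the absence of any further ones.
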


 Notice that the curves $\rho_1, \ldots, \rho_{n}, \sigma_1, \ldots, \sigma_{2k}$ are exactly the ones whose holonomy is being considered in Definition \ref{def:gluing}.

\begin{proof}
By Proposition \ref{prop:polytope}, $\dim T_p \ang(\tau) = n + k$. Thus it suffices to show that $\ww(\rho_i), \ww(\sigma_j)$ span a vector space of this dimension.

First, we claim that the vectors $\ww(\sigma_1), \ldots, \ww(\sigma_{2k})$ are linearly independent from one another, and from each of the  $\ww(\rho_i)$. Suppose, after renumbering, that $\sigma_1$ and $\sigma_2$ are homology basis curves on the same torus of $\bdy M$. Then $\iota(\sigma_1, \sigma_2) = \pm 1$, while $\iota(\sigma_j, \sigma_2) = 0$ for every $j \neq 1$ (including $j=2$). Similarly, since each $\rho_i$ is homotopically trivial, $\iota(\rho_i, \sigma_2) = 0$ for every $i$. Thus, by Lemma \ref{lemma:deformation-crossing}, $\ww(\sigma_1)$ is the only deformation among the $\rho_i$ and $\sigma_j$ that affects $\Imm(H(\sigma_2))$. Therefore, $\ww(\sigma_1)$ is independent from all the other vectors in the collection. Similarly, each $\ww(\sigma_j)$ is independent from all the other vectors in the collection.

To complete the proof, it will suffice to show that $\ww(\rho_1), \ldots, \ww(\rho_n)$ span a vector space of dimension $n-k$. Let $B$ be the $(n \times 3n)$ matrix whose $i^\thh$ row is $\ww(\rho_i)$. Then, we claim that $\rank(B) = n-k$. The proof of this claim is virtually identical to the proof of Lemma \ref{lemma:rank}. For every cusp $c$, we define a row vector $\rr_c \in \RR^n$, whose $i^\thh$ entry is the number of endpoints that the $i^\thh$ edge $e_i$ has at cusp $c$. (This is the second half of the vector $\rr_c$ from Lemma \ref{lemma:rank}.) Then, by the same argument as in that lemma, one checks that the vectors $\rr_c$ form a basis for the row null space of $B$. Therefore, $\rank(B) = n-k$, and the  $\ww(\rho_i), \ww(\sigma_j)$ span a vector space of dimension $(n-k)+2k$, as required.
\end{proof}

\section{Volume maximization}\label{sec:volume-max}

In this section, we show how volume considerations give a way to turn an angle structure into a genuine hyperbolic metric on $M$.  To compute the volume of an ideal hyperbolic tetrahedron, recall the Lobachevsky function $\lob: \RR \to \RR$. Its definition is
\begin{equation}\label{eq:lobachevsky}
\lob(x) = - \int_0^x \log \abs{2\sin t} \, dt.
\end{equation}

\begin{lemma}\label{lemma:tet-volume}
The Lobachevsky function $\lob(x)$ is well defined and continuous on $\RR$ (even though the defining integral is improper), and periodic with period $\pi$. Furthermore, if $T$ is a hyperbolic ideal tetrahedron with dihedral angles $\alpha, \beta, \gamma$, its volume satisfies
$$\mathrm{vol}(T) = \lob(\alpha) + \lob(\beta) + \lob(\gamma).$$
\end{lemma}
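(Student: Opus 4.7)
The three assertions are essentially independent, and I would dispatch the two analytic ones quickly before turning to the volume formula, which carries the substantive content.

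For well-definedness and continuity, note that $\log|2\sin t|$ has only logarithmic singularities (at $t\in\pi\ZZ$); since $\log$ is locally Lebesgue-integrable near $0$, the improper integral defining $\lob(x)$ converges absolutely, and continuity of $\lob$ follows from absolute continuity of the indefinite integral of any locally $L^1$ function. For periodicity, it suffices to show $\int_0^\pi \log|2\sin t|\,dt = 0$. Setting $I = \int_0^{\pi/2}\log(2\sin t)\,dt$, the substitution $t\mapsto \pi/2 - t$ gives $I = \int_0^{\pi/2}\log(2\cos t)\,dt$; adding these and using $2\sin t\cos t = \sin 2t$ yields
$$2I = \int_0^{\pi/2}\log(2\sin 2t)\,dt = \tfrac12\int_0^\pi\log(2\sin u)\,du = \tfrac12(2I) = I,$$
so $I = 0$. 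Since $\log|2\sin(t+\pi)| = \log|2\sin t|$, the identity $\int_0^\pi\log|2\sin t|\,dt = 0$ implies $\lob(x+\pi) = \lob(x)$.

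For the volume formula, I would apply an isometry of $\HH^3$ moving one ideal vertex of $T$ to $\infty$ in the upper half-space model. The remaining three vertices then lie on $\CC = \bdy\HH^3$ and form a Euclidean triangle $\Delta$ whose interior angles are exactly the dihedral angles $\alpha,\beta,\gamma$ along the three vertical edges of $T$. Using the hyperbolic volume form $dV = z^{-3}\,dx\,dy\,dz$ and integrating $z$ from the three bounding hemispheres up to $\infty$, the computation of $\mathrm{vol}(T)$ reduces to a two-dimensional integral over $\Delta$. To evaluate it I would decompose $\Delta$ into six right sub-triangles by joining its circumcenter $O$ to the three vertices and dropping perpendiculars from $O$ to the three sides; each sub-triangle is the Euclidean base of an ``ideal orthoscheme'' sitting inside $T$. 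A direct integration in polar coordinates centered at the vertical axis through $O$ shows that each such orthoscheme has volume $\tfrac12\lob(\theta)$, where $\theta$ is its base angle at the corresponding vertex of $\Delta$. Pairing the six orthoschemes around the three vertices of $\Delta$ and summing, the six Lobachevsky contributions collapse into $\lob(\alpha) + \lob(\beta) + \lob(\gamma)$.

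The main obstacle is the orthoscheme volume computation itself: one must evaluate an integral whose antiderivative is $-\log|2\sin t|$, which is really the origin of the definition of $\lob$. A secondary but non-trivial bookkeeping issue arises when $\Delta$ is obtuse, since then $O$ lies outside $\Delta$ and one or two of the orthoschemes contribute with opposite orientation; the oddness of $\lob$ (a consequence of the evenness of $|\sin t|$) absorbs the sign discrepancy and allows the signed sum still to produce the symmetric formula. Finally, the resulting expression is manifestly symmetric in $\alpha,\beta,\gamma$, so one automatically obtains independence from the choice of which ideal vertex was sent to $\infty$.
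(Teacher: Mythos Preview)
Your proposal is correct. The paper itself does not prove this lemma at all: it simply writes ``See, for example, Milnor \cite{milnor:150years}.'' Your sketch is precisely Milnor's classical argument --- the periodicity via the $\int_0^{\pi/2}\log(2\sin t)\,dt = 0$ identity, and the volume formula via decomposition into six orthoschemes over the circumcenter of the base triangle, with the oddness of $\lob$ handling the obtuse case --- so you have supplied exactly the details the paper defers to that reference.
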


\begin{proof}
See, for example, Milnor \cite{milnor:150years}.
\end{proof}

Following Lemma \ref{lemma:tet-volume}, we may define the volume of an angle structure in a natural way.

\begin{define}\label{def:vol-functional}
Let $\tau$ be an ideal triangulation of $M$, containing $n$ tetrahedra. Let
 $\ang(\tau) \subset \RR^{3n}$ be the polytope of angle structures on $M$. Then we define a \emph{volume functional} $\vol: \overline{\ang(\tau)} \to \RR$, by assigning to a point $p = (p_1, \ldots, p_{3n})$ the real number
$$\vol(p) = \lob(p_1) + \ldots + \lob(p_{3n}).$$
By Lemma \ref{lemma:tet-volume}, $\vol(p)$ is equal to the sum of the volumes of the hyperbolic tetrahedra associated to the angle structure $p$.
\end{define}

\begin{lemma}\label{lemma:vol-deriv}
Let $p = (p_1, \ldots, p_{3n}) \in \ang(\tau)$ be an angle structure on $\tau$, and let \linebreak $\ww = (w_1, \ldots, w_{3n}) \in T_p \ang(\tau)$ be a nonzero tangent vector at $p$. Then the first two derivatives of $\vol(p)$ satisfy
$$\dfrac{\partial \vol}{\partial \ww} = \sum_{i=1}^{3n} -w_i \log \sin p_i
\qquad \mbox{and} \qquad
 \dfrac{\partial^2 \vol}{\partial \ww^2} < 0.$$
In particular, $\vol$ is strictly concave down on $\ang(\tau)$.
\end{lemma}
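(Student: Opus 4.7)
The plan is to reduce everything to a per-tetrahedron computation, using only the constraint that the three dihedral angles in each tetrahedron sum to $\pi$ (equivalently, that every tangent vector $\ww \in T_p\ang(\tau)$ satisfies $w_{3i-2}+w_{3i-1}+w_{3i}=0$ for each tetrahedron $T_i$). The ambient conditions imposed by the edge equations (condition (3) of Definition~\ref{def:angled-polytope}) will play no role: strict concavity already holds on the larger linear subspace cut out by condition (2) alone.

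First, I would differentiate the Lobachevsky function. From \eqref{eq:lobachevsky} we have $\lob'(x)=-\log|2\sin x|$ and $\lob''(x)=-\cot x$. Since $\vol(p)=\sum_i\lob(p_i)$, the directional derivative along $\ww$ is
\[
\frac{\partial \vol}{\partial \ww}=\sum_{i=1}^{3n}-w_i\log|2\sin p_i|=-\log 2\sum_{i=1}^{3n}w_i+\sum_{i=1}^{3n}-w_i\log\sin p_i.
\]
Grouping the coordinates by tetrahedron and using $w_{3i-2}+w_{3i-1}+w_{3i}=0$ shows $\sum_i w_i=0$, so the constant term drops out and the first formula follows. (Here $p_i\in(0,\pi)$ lets me replace $|\sin p_i|$ with $\sin p_i$.)

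For the second derivative,
\[
\frac{\partial^2\vol}{\partial \ww^2}=-\sum_{i=1}^{3n}\cot(p_i)\,w_i^2,
\]
which splits as a sum over tetrahedra. Thus it suffices to show that for a single tetrahedron with angles $(\alpha,\beta,\gamma)$ satisfying $\alpha+\beta+\gamma=\pi$ and a nonzero tangent vector $(a,b,c)$ with $a+b+c=0$, the quantity $\cot\alpha\cdot a^2+\cot\beta\cdot b^2+\cot\gamma\cdot c^2$ is strictly positive. Substituting $c=-(a+b)$ yields the binary quadratic form
\[
Q(a,b)=(\cot\alpha+\cot\gamma)a^2+2\cot\gamma\,ab+(\cot\beta+\cot\gamma)b^2,
\]
and I would check positive definiteness via Sylvester's criterion. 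Using the standard identity $\cot\alpha+\cot\gamma=\sin(\alpha+\gamma)/(\sin\alpha\sin\gamma)=\sin\beta/(\sin\alpha\sin\gamma)>0$ (and the symmetric one for $\cot\beta+\cot\gamma$), both diagonal entries are positive. The determinant computes cleanly:
\[
(\cot\alpha+\cot\gamma)(\cot\beta+\cot\gamma)-\cot^2\gamma=\frac{\sin\beta}{\sin\alpha\sin\gamma}\cdot\frac{\sin\alpha}{\sin\beta\sin\gamma}-\frac{\cos^2\gamma}{\sin^2\gamma}=\frac{1-\cos^2\gamma}{\sin^2\gamma}=1,
\]
which is strictly positive. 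Hence $Q$ is positive definite on $\{a+b+c=0\}$, so each per-tetrahedron contribution to $\partial^2\vol/\partial\ww^2$ is nonpositive, and is strictly negative whenever $(a,b,c)\neq 0$. Since $\ww\neq 0$, at least one tetrahedron contributes strictly, giving $\partial^2\vol/\partial\ww^2<0$ and concavity.

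The only genuine subtlety is the positive-definiteness computation; the rest is bookkeeping. I expect the sign identity $\cot\alpha+\cot\gamma=\sin\beta/(\sin\alpha\sin\gamma)$ and the elegant cancellation of the determinant to $1$ to be the crux — these use the angle-sum condition in an essential way, which is why the argument fails (as it must) without condition (2).
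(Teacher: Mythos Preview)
Your proof is correct and follows essentially the same strategy as the paper: reduce to a single tetrahedron, use $\sum w_i=0$ to kill the $\log 2$ term in the first derivative, and for the second derivative eliminate one variable and show the resulting binary quadratic form is positive definite. The only difference is tactical --- you verify positive definiteness via Sylvester's criterion and the identity $\cot\alpha+\cot\gamma=\sin\beta/(\sin\alpha\sin\gamma)$ (yielding determinant exactly $1$, which is a nice touch), whereas the paper assumes without loss of generality that two of the angles are acute and then completes the square to write the form as an explicit sum of squares over a positive denominator.
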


\begin{proof}
Since the definition of $\vol$ is linear over the tetrahedra in $\tau$, it suffices to consider the volume of one tetrahedron. Thus, suppose that a tetrahedron $T$ has angles $p_1, p_2, p_3 > 0$, which are changing at rates $w_1, w_2, w_3$. Note that, since the tangent vector $\ww$ must preserve the angle sum in each tetrahedron, we have $w_1 + w_2 + w_3 = 0$. We may also assume that at least one (hence, at least two) of the $w_i$ are nonzero. Then, by equation (\ref{eq:lobachevsky}) and Lemma \ref{lemma:tet-volume},
$$\dfrac{\partial \mathrm{vol}(T)}{\partial \ww} \: = \:  \sum_{i=1}^{3}  -  w_i  \log \abs{2\sin p_i} \: = \:  \sum_{i=1}^{3}  -  w_i  \log \sin p_i, $$
because all sines are positive and $\sum w_i \log 2 = 0$. This completes the computation of the first derivative.

To compute the second derivative, assume by symmetry that $p_1, p_2 < \pi/2$. Differentiating $\mathrm{vol}(T)$ a second time, we get
\begin{eqnarray*} 
 - \: \dfrac{\partial^2  \mathrm{vol}(T)}{\partial \ww^2}
&=& w_1^2 \cot p_1 +w_2^2 \cot p_2 + w_3^2 \cot p_3 \\
&=&  w_1^2 \cot p_1 +w_2^2 \cot p_2  +(w_1 + w_2 )^2\frac{1-\cot p_1 \cot p_2}{\cot p_1+ \cot p_2}\\ 
&=&\frac{(w_1 + w_2 )^2+(w_1 \cot p_1 - w_2 \cot p_2)^2}{\cot p_1 + \cot p_2} \\
&\geq& 0.
\end{eqnarray*} 
In fact, the numerator in the next-to-last line must be strictly positive. For the numerator to be $0$, we must have $w_1 = - w_2$, hence $  \cot p_1 = - \cot p_2$, which is impossible when $p_1, p_2 \in (0, \pi/2)$. Thus $\partial^2  \mathrm{vol}(T) / \partial \ww^2 < 0$, and the volume functional $\vol$ is also strictly concave down.
\end{proof}

By Lemma \ref{lemma:vol-deriv}, the only potential critical point of $\vol$ is a global maximum. As the next proposition shows, derivatives of $\vol$ are closely connected to the holonomy of curves on $\bdy M$.

\begin{prop}\label{prop:volume-computation}
Let $C$ be a cusp torus of $M$, with a tessellation by boundary triangles coming from $\tau$. Let $\sigma \subset C$ be an oriented normal closed curve. Recall the holonomy $H(\sigma)$ from Definition \ref{def:holonomy} and the tangent vector $\ww(\sigma) \in T_p \ang(\tau)$ from Definition \ref{def:lead-trail}.

Then for every point $p \in \ang(\tau)$, we have
$$\frac{\partial \vol}{\partial \ww(\sigma)} = Re (H(\sigma)).$$
\end{prop}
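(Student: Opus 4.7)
The plan is to reduce the identity to a local check, one segment at a time. Decompose $\ww(\sigma) = \sum_i \ww(\sigma_i)$ per Definition \ref{def:lead-trail} and $H(\sigma) = \sum_i \epsilon_i \log z_i$ per Definition \ref{def:holonomy}; by linearity it suffices to show, for each segment $\sigma_i$, the local identity
$$\frac{\partial \vol}{\partial \ww(\sigma_i)} \: = \: \epsilon_i \, \Ree(\log z_i).$$
Fix such a segment, lying in a boundary triangle $\Delta_i$. Let $\alpha$ be the angle of $\Delta_i$ at the corner cut off by $\sigma_i$, and let $\beta, \gamma$ be the two remaining angles of $\Delta_i$ listed in clockwise order starting from $\alpha$. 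The shape-parameter formula (\ref{eq:angle-shape}) then gives $z_i = (\sin\gamma / \sin\beta)\, e^{i\alpha}$; taking the principal branch of $\log$ as in Definition \ref{def:holonomy}, one obtains $\Ree(\log z_i) = \log\sin\gamma - \log\sin\beta$.

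For the left side of the local identity, Definition \ref{def:lead-trail} says that $\ww(\sigma_i)$ has a $+1$ in the coordinate of the leading corner of $\Delta_i$, a $-1$ in the coordinate of the trailing corner, and zeros elsewhere (since it only touches the ambient tetrahedron of $\Delta_i$). Substituting into the first-derivative formula of Lemma \ref{lemma:vol-deriv} yields
$$\frac{\partial \vol}{\partial \ww(\sigma_i)} \: = \: -\log\sin(\text{leading angle}) + \log\sin(\text{trailing angle}).$$
The remaining task is an orientation check. With the cusp-torus orientation inherited from $M$, when the cut-off corner $\alpha$ lies to the left of $\sigma_i$ (so $\epsilon_i = +1$), a direct inspection in a model boundary triangle shows that the leading corner is exactly $\beta$ and the trailing corner is $\gamma$; substituting gives $-\log\sin\beta + \log\sin\gamma = \Ree(\log z_i)$. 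Reversing the direction of $\sigma_i$ swaps entering and exiting sides, interchanges leading and trailing, and simultaneously flips $\epsilon_i$, so the identity is preserved. Summing over $i$ then gives $\partial\vol/\partial\ww(\sigma) = \sum_i \epsilon_i \,\Ree(\log z_i) = \Ree(H(\sigma))$.

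The principal obstacle is the sign/orientation bookkeeping in the middle of this argument: one must reconcile the ``clockwise order'' convention used in equation (\ref{eq:angle-shape}), the ``left of $\sigma$'' convention in Definition \ref{def:holonomy}, and the leading/trailing convention of Definition \ref{def:lead-trail}. Each is natural in isolation, but aligning all three in a single picture is the real substance of the proof. Once a single representative case is checked, the rest follows from the evident $\sigma_i \mapsto -\sigma_i$ symmetry, and the derivative and the real part of the holonomy split term-for-term into matching pairs of $\log\sin$ expressions.
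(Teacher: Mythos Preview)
Your proof is correct and follows essentially the same approach as the paper: both decompose over segments, label the angles $\alpha_i,\beta_i,\gamma_i$ clockwise with $\alpha_i$ the cut-off corner, invoke Lemma~\ref{lemma:vol-deriv} for the derivative, and use equation~(\ref{eq:angle-shape}) to identify $\log(\sin\gamma_i/\sin\beta_i)$ with $\log|z_i|$. The paper packages the computation as a single chain of equalities and states the orientation fact (``$\ww(\sigma_i)$ increases $\beta_i$ at rate $\epsilon_i$ and $\gamma_i$ at rate $-\epsilon_i$'') by direct comparison with Figure~\ref{fig:lead-trail}, which is exactly your ``direct inspection in a model boundary triangle.''
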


\begin{proof}
Let $\sigma_1, \ldots, \sigma_k$ be the segments of $\sigma$, with $\sigma_i$ contained in boundary triangle $\Delta_i$. For each $i$, label the angles of $\Delta_i$ as $\alpha_i, \beta_i, \gamma_i$, in clockwise order, such that $\alpha_i$ is the angle cut off by $\sigma_i$. Recall that, in Definition  \ref{def:holonomy}, we defined 
$$\epsilon_i = \left \{
\begin{array}{rl}
1, & \mbox{if $\alpha_i$ is to the left of $\sigma_i$,}\\
-1, & \mbox{if $\alpha_i$ is to the right of $\sigma_i$.}
\end{array} \right.
$$
Comparing this with Definition \ref{def:lead-trail} and Figure \ref{fig:lead-trail}, we see that the vector $\ww(\sigma_i)$ increases angle $\beta_i$ at rate $\epsilon_i$, and increases $\gamma_i$ at rate $-\epsilon_i$.
By Lemma \ref{lemma:vol-deriv},
%\pagebreak

\begin{eqnarray*}
\frac{\partial \vol}{\partial \ww(\sigma)}
&=& \sum_{i=1}^k \frac{\partial \vol}{\partial \ww(\sigma_i)} \\
&=&  \sum_{i=1}^k \left( - \epsilon_i \log \sin \beta_i + \epsilon_i \log \sin \gamma_i \right) \\
&=& \sum_{i=1}^k \epsilon_i \log \left( \frac{\sin  \gamma_i}{\sin \beta_i} \right) \\
&=& \sum_{i=1}^k \epsilon_i \log \abs{z_i} \qquad \qquad \mbox{by equation (\ref{eq:angle-shape})} \\
&=& Re \: \sum_{i=1}^k \epsilon_i \log z_i ,
\end{eqnarray*}
as desired.
\end{proof}

Recall, from Section \ref{sec:angle-struct}, that an angle structure on $\tau$ corresponds to solving the imaginary part of the edge gluing equations. By Proposition \ref{prop:volume-computation}, solving the real part of each edge equation amounts to having vanishing derivative in the direction of the corresponding deformation. This turns out to be the crucial step in the proof of Theorem \ref{thm:volume-max}.

% We can now complete the proof of Theorem \ref{thm:volume-max}.

\begin{named}{Theorem \ref{thm:volume-max}}
Let $M$ be an orientable $3$--manifold with boundary consisting of tori, and let $\tau$ be an ideal triangulation of $M$. Then a point $p \in \ang(\tau)$ corresponds to a complete hyperbolic metric on the interior of $M$ if and only if $p$ is a critical point of the functional $\vol: \ang(\tau) \to \RR$.
\end{named}

\begin{proof}
For one direction of the theorem, suppose that $p \in \ang(\tau)$ is a critical point of $\vol$. This angle structure defines a shape parameter on each tetrahedron of $\tau$. By Proposition \ref{prop:completeness}, proving that these shape parameters give a complete hyperbolic metric on $M$ amounts to checking the edge and completeness equations of Definition \ref{def:gluing}. 

First, consider the edge equation about an edge $e$. Note that the imaginary part of the gluing equation about edge $e$ is automatically satisfied for any angle structure. To check the real part of the gluing equation, let $\sigma \subset \bdy M$ be a normal closed curve encircling one endpoint of $e$. Since $p$ is a critical point of $\vol$, Proposition \ref{prop:volume-computation} implies that $\Ree(H(\sigma))=0$, as desired. Thus the edge gluing equations are satisfied.

To check completeness, let $C$ be a boundary torus of $M$, and let $\sigma_1, \sigma_2$ be a pair of simple closed normal curves that span $\pi_1(C) = H_1(C)$. Recall that $C$ is tiled by boundary triangles that truncate the tips of ideal tetrahedra. The angle structure $p$ gives each of these triangles a Euclidean shape, well-defined up to similarity. If $\Delta$ is a triangle in which $\sigma_1$ and $\sigma_2$ intersect, we may place the corners of $\Delta$ at $0,1,z \in \CC$, and develop the other triangles of $C$ from there. Note that, since the edge gluing equations are satisfied, the boundary triangles fit together correctly around every vertex of $C$.

Let $d_1$ and $d_2$ be the deck transformations of $\widetilde{C}$ corresponding to $\sigma_1$ and $\sigma_2$. Consider the complex numbers
$$q= d_1(0), \qquad r=d_2(0), \qquad s = d_1(r) = d_2 (q).$$
The four points $0,q,r,s$ form four corners of a fundamental domain for $C$.
By the definition of holonomy (Definition \ref{def:holonomy}) and the solution to the edge equations,
$$H(\sigma_1) = \log \frac{s-q}{r-0} \, , \qquad H(\sigma_2) = \log \frac{s-r}{q-0}  \, .$$
By Proposition \ref{prop:volume-computation}, the real part of each of these holonomies is $0$. Thus we have $\abs{s-q}=\abs{r}$ and $\abs{s-r} = \abs{q}$, hence the fundamental domain of $C$ is a parallelogram. Therefore,  $H(\sigma_1) = H(\sigma_2) = 0$, and the completeness equations are satisfied for cusp $C$.

For the converse implication of the theorem, suppose that $p \in \ang(\tau)$ defines a complete metric. Then the tetrahedron shapes corresponding to $p$ must satisfy all the edge and completeness equations of Definition \ref{def:gluing}: we must have $H(\sigma) = 2\pi i$ for every closed curve $\sigma$ encircling an endpoint of an edge, and $H(\sigma_j) = 0$ for a collection of simple closed curves $\sigma_1, \ldots, \sigma_{2k}$ that form a basis of $H_1(\bdy M)$. In particular, the real part of each of these holonomies is $0$. But, by Proposition \ref{prop:tangent-span}, the leading--trailing deformations that correspond to these closed curves span $T_p \ang(\tau)$. Thus $p$ is a critical point of $\vol$, as desired.
\end{proof}

\section{Extensions, applications, generalizations}\label{sec:extensions}

This final section of the paper surveys several ways in which the Casson--Rivin program has been generalized and extended, as well as several infinite families of manifolds to which the program has been successfully applied. 

Theorem \ref{thm:volume-max} concerns manifolds with non-empty boundary that consists of tori. It is natural to ask whether similar methods can be applied to treat manifolds with more general boundary, or closed manifolds that have no boundary at all. Indeed, there has been considerable progress in these areas.

\subsection{Closed manifolds via Dehn filling}\label{sec:filling}
The most straightforward way to extend Theorem \ref{thm:volume-max} to closed manifolds is via Dehn surgery. If $C$ is a boundary torus of $M$, and $ \mu, \lambda$ are  simple closed normal curves that form a basis for $H_1(C)$, then $M(p/q)$ is the manifold obtained by attaching a solid torus to $C$, such that the boundary of the meridian disk is mapped to $p \mu + q \lambda$. In terms of the gluing equations of Definition \ref{def:gluing}, attaching a disk to the closed curve $p \mu + q \lambda$ is equivalent to solving the holonomy equation 
\begin{equation}\label{eq:surgery}
p \, H(\mu) + q \, H(\lambda) = 2 \pi i.
\end{equation}
Just as above, the imaginary part of equation $(\ref{eq:surgery})$ is linear in the angles of $\tau$. Imposing this linear equation corresponds to taking a codimension--$1$ linear slice of the angle space $\ang(\tau)$.

\begin{theorem}\label{thm:angled-surgery}
Let $M$ be a manifold with  boundary a single torus  $C$, and let $\tau$ be an ideal triangulation of $M$. Choose a pair $(p,q)$ of relatively prime integers, and let $\ang_{p/q}(\tau) \subset \ang(\tau)$ be the set of all angle structures that satisfy the imaginary part of equation $(\ref{eq:surgery})$. Then a critical point of the volume functional $\vol$ on $\ang_{p/q}(\tau)$ yields a complete hyperbolic structure on $M(p/q)$, the $p/q$ Dehn filling of $M$.

The analogous statement holds for fillings along multiple boundary tori.
\end{theorem}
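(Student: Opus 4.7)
The plan is to mimic the argument for Theorem \ref{thm:volume-max}, replacing the cusp completeness condition at $C$ by the single Dehn--filling relation $p\,H(\mu) + q\,H(\lambda) = 2\pi i$. The set $\ang_{p/q}(\tau)$ is cut out of $\ang(\tau)$ by the single linear equation $\Imm\bigl(p\,H(\mu) + q\,H(\lambda)\bigr) = 2\pi$, hence is a convex polytope of one less dimension, on which $\vol$ remains strictly concave by Lemma \ref{lemma:vol-deriv}. In particular, a critical point in $\ang_{p/q}(\tau)$ is unique and a global maximum.

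Let $x \in \ang_{p/q}(\tau)$ be a critical point of $\vol$ restricted to the slice, and set $m := p\mu + q\lambda$. The plan is to apply Lagrange multipliers: there is a real constant $K$ so that for every tangent vector $\ww \in T_x \ang(\tau)$,
\[
\frac{\partial \vol}{\partial \ww} \: = \: K \cdot \frac{\partial \Imm H(m)}{\partial \ww}.
\]
Evaluating on a leading--trailing deformation $\ww(\sigma)$ and combining Proposition \ref{prop:volume-computation} with Lemma \ref{lemma:deformation-crossing} yields the key identity
\[
\Ree H(\sigma) \: = \: 2K\,\iota(m,\sigma)
\]
for every oriented normal closed curve $\sigma$ on $C$.

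From this identity I would read off Thurston's gluing equations exactly as in the proof of Theorem \ref{thm:volume-max}. For the null--homotopic loop $\sigma = \rho_e$ around an endpoint of an edge $e$, the intersection number $\iota(m,\rho_e)$ vanishes, forcing $\Ree H(\rho_e)=0$; together with $\Imm H(\rho_e) = 2\pi$ (condition (3) of Definition \ref{def:angled-polytope}), this recovers edge equation \eqref{eq:edge-gluing}. For the filling relation, antisymmetry of $\iota$ gives $\iota(m,\mu) = -q$ and $\iota(m,\lambda) = p$, whence $\Ree H(\mu) = -2Kq$, $\Ree H(\lambda) = 2Kp$, and $p \,\Ree H(\mu) + q\, \Ree H(\lambda) = 0$; combined with the defining linear equation of $\ang_{p/q}(\tau)$, this reproduces \eqref{eq:surgery}.

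The hardest step will be to upgrade these equations to a genuine complete metric on $M(p/q)$. As in Proposition \ref{prop:completeness}, the edge equations let me glue the tetrahedra to a (possibly incomplete) hyperbolic metric on $M \setminus \bdy M$ with a developing map. Because $H(m) = 2\pi i$, the meridian of the added solid torus acts as the identity in the holonomy representation of $\pi_1(C)$, so the developing image of a neighborhood of $C$ caps off correctly across the core curve of the filling, whose complex translation length is encoded by a dual longitude $\ell$ (one checks $\Ree H(\ell) = 2K$). Completeness on $M(p/q)$ then follows from the same compactness argument as in Proposition \ref{prop:completeness}, once a tubular neighborhood of the core geodesic is removed. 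The main delicacy is the degenerate possibility $K = 0$, where the core collapses to zero length and the meridian holonomy is merely a non-translating rotation; the multi--cusp generalization proceeds componentwise, with one Lagrange multiplier per filled boundary torus and the original completeness conditions retained at the unfilled ones.
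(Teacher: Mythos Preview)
The paper does not actually prove this theorem: it records only that ``the proof follows the same outline as Theorem~\ref{thm:volume-max}'' and points to Chan's thesis. Your argument is a correct way to carry out that outline, and the key inputs are exactly the ones the paper would use (Proposition~\ref{prop:volume-computation} and Lemma~\ref{lemma:deformation-crossing}).

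Where you deviate slightly from the most literal reading of ``same outline'' is in invoking Lagrange multipliers on the ambient tangent space $T_x\ang(\tau)$. The more direct parallel to the proof of Theorem~\ref{thm:volume-max} is to observe that $T_x\ang_{p/q}(\tau)$ is spanned by the edge deformations $\ww(\rho_i)$ together with $\ww(m)$ itself, since $\iota(m,\rho_i)=\iota(m,m)=0$; then criticality on the slice gives $\Ree H(\rho_i)=0$ and $\Ree H(m)=0$ immediately, without an auxiliary constant. Your route buys something in exchange for the extra step: the multiplier $K$ genuinely encodes the (real) length of the core geodesic via $\Ree H(\ell)=2K$, which the direct approach does not surface.

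Two small points you can tighten. First, your worry about $K=0$ resolves cleanly: if $K=0$ then $x$ is critical on all of $\ang(\tau)$, hence by Theorem~\ref{thm:volume-max} gives the complete cusped structure with $H(\mu)=H(\lambda)=0$, contradicting $\Imm H(m)=2\pi$. Second, once $K\neq 0$ at least one of $\Ree H(\mu),\Ree H(\lambda)$ is nonzero, so the holonomy of $\pi_1(C)$ is genuinely loxodromic with a common axis; conjugating that axis to $0\infty$ makes the holonomy of $m$ equal to $z\mapsto e^{H(m)}z=z$, so the meridian really does act trivially and the completion is nonsingular. That fills the small gap in your ``caps off correctly'' sentence.
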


The proof follows the same outline as Theorem \ref{thm:volume-max}; see \cite[Theorem 6.2]{chan-hodgson} for more details.

\subsection{Manifolds with polyhedral boundary}\label{sec:polyhedra}
There is also an analogue of Theorem \ref{thm:volume-max} for ideal triangulations where not every face of the tetrahedra is glued to another face. Given such a partial gluing of tetrahedra, one obtains a $3$--manifold $N$ whose boundary is subdivided into ideal triangles. If the tetrahedra carry dihedral angles, then every edge along $\bdy N$ will also carry a prescribed angle. The simplest case of this is when $N$ is an ideal polyhedron, but one may also consider cases where $N$ has more complicated topology.

It is worth asking exactly when a $3$--manifold $N$ with polyhedral boundary, and with a fixed assignment of convex dihedral angles, carries a complete hyperbolic metric 
%
%[[FG: insert]]
with ideal vertices
that realizes those angles. The following combinatorial condition was suggested by Rivin \cite{rivin:topology}:

\begin{itemize}
\item[$(*)$] For every simple closed normal curve $\sigma \subset \bdy N$ that bounds a disk in $N$, the sum of exterior angles along $\sigma$ is at least $2\pi$, with equality iff $\sigma$ encircles an ideal vertex.
\end{itemize}

\begin{theorem}\label{thm:polyhedra}
Let $N$ be a $3$--manifold with polyhedral boundary and prescribed (convex) dihedral angles along every edge of $\bdy N$. Suppose that $N$ is irreducible and atoroidal, and furthermore that $N$ is a $3$--ball, or a solid torus, or has incompressible boundary. Then $N$ carries a complete hyperbolic metric realizing the prescribed angles if and only if condition $(*)$ holds. Furthermore, any hyperbolic realization is unique up to isometry.
\end{theorem}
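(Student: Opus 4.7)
The plan is to adapt the volume-maximization argument of Section \ref{sec:volume-max} to the partial-gluing setting, treating the prescribed dihedral angles along $\bdy N$ as additional linear constraints on the angle polytope. Choose an ideal triangulation $\tau$ of $N$ that refines the polyhedral structure of $\bdy N$, so every face of $\tau$ in $\bdy N$ is an ideal triangle contained in a face of the polyhedron. Define $\ang(\tau, \bdy N) \subset \RR^{3n}$ by Definition \ref{def:angled-polytope}, augmented with a single modification to condition (3): at every edge $e$ of $\tau$ lying in $\bdy N$, the dihedral angles of the incident tetrahedra must sum to the prescribed interior dihedral angle at $e$, while at every interior edge they must still sum to $2\pi$. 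As in Proposition \ref{prop:polytope}, $\ang(\tau, \bdy N)$ is a bounded convex polytope; Lemma \ref{lemma:vol-deriv} is local to individual tetrahedra and gives strict concavity of $\vol$ on $\ang(\tau, \bdy N)$; and Proposition \ref{prop:volume-computation}, which is a local statement about leading--trailing deformations along normal curves, carries over verbatim.

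With this machinery in hand, the forward direction of Theorem \ref{thm:volume-max} runs unchanged: an interior critical point $p$ of $\vol$ forces $\Ree H(\sigma) = 0$ for every normal curve $\sigma$ encircling an endpoint of an interior edge or an ideal vertex of $\bdy N$, which, combined with the angle-sum conditions built into $\ang(\tau, \bdy N)$, yields all the interior edge gluing equations together with the completeness equations at each ideal vertex of $\bdy N$. An adaptation of Proposition \ref{prop:completeness} then produces a hyperbolic realization of $N$ with the prescribed angles. Uniqueness is immediate from strict concavity: two such realizations would lift to two critical points of $\vol$ on a common refinement, and strict concavity forces them to coincide.

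The two genuinely new ingredients are (i) non-emptiness of $\ang(\tau, \bdy N)$, which should be equivalent to condition $(*)$ via a linear-programming duality argument in the spirit of Rivin \cite{rivin:topology}: any infeasibility of the linear constraints certifies a normal curve in $\bdy N$ bounding a disk in $N$ along which the exterior angle sum drops strictly below $2\pi$; and (ii) showing that the maximum of $\vol$ on the compact closure $\overline{\ang(\tau, \bdy N)}$ is attained in the interior. Ingredient (ii) is the main obstacle, and is where the topological hypotheses enter. A boundary maximum would correspond to a nonempty set of tetrahedra with a dihedral angle equal to $0$ or $\pi$; following the normal-surface strategy sketched for Theorem \ref{thm:ang-hyperbolization}, this degenerate locus supports a normal surface $S \subset N$ of non-negative combinatorial area. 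Irreducibility and atoroidality rule out essential spheres and tori, and the assumption that $N$ is a $3$-ball, a solid torus, or has incompressible boundary rules out essential disks and annuli. Hence $S$ must be inessential, and compressing or boundary-compressing it produces a normal curve $\sigma \subset \bdy N$ bounding a disk in $N$ along which the exterior-angle sum exactly equals $2\pi$. A local perturbation of the angles resolving the degeneracy across this disk must then strictly increase $\vol$, contradicting maximality and completing the proof.
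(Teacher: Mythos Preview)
The paper does not prove Theorem \ref{thm:polyhedra}; it is a survey statement that cites Rivin for the $3$--ball case, Gu\'eritaud for solid tori, and Schlenker for incompressible boundary, offering only the two-line sketch ``first prove $\ang(\tau) \neq \emptyset$, then prove $\vol$ has an interior critical point.'' Your overall strategy matches that sketch, and your treatment of the critical-point-implies-metric direction and of uniqueness via strict concavity is correct.

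There is, however, a genuine gap in your ingredient (ii). You claim that the hypothesis ``$N$ is a $3$--ball, a solid torus, or has incompressible boundary'' rules out essential disks and annuli. This is false: a solid torus contains an essential meridian disk, and it is precisely condition $(*)$ --- not the topological type of $N$ --- that controls it. In Gu\'eritaud's argument for solid tori, the strict exterior-angle inequality along the meridian curve is exactly what prevents the corresponding degeneration; the analysis of a boundary maximum is a delicate case-by-case study of which tetrahedra flatten, exploiting that $\lob'(x) \to +\infty$ as $x \to 0^+$ to find a volume-increasing direction. Your final sentence, which passes from ``$S$ is inessential'' to ``compress to a curve with exterior-angle sum exactly $2\pi$, then perturb to increase $\vol$,'' conflates two unrelated mechanisms: the normal-surface obstruction behind Theorem \ref{thm:ang-hyperbolization} and the boundary behavior of the volume functional. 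It is not an argument. The interior-maximum step is the substantive content of each of the three cited papers and is not a formal consequence of the machinery in Section \ref{sec:volume-max}.
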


% [[FG: replace]]
%The case when $N$ is a polyhedron is due to Rivin \cite{rivin:polyhedron, rivin:optimization}; the case of solid tori, to Gu\'eritaud \cite{gueritaud:solid-torus}; and the case of incompressible boundary, to Schlenker \cite{schlenker:polyhedral-boundary}. In all cases, the argument works by first proving that $N$ has an ideal triangulation with a non-empty angle space $\ang(\tau)$, and then proving that the volume functional $\vol$ has a critical point in $\ang(\tau)$. This critical point gives the hyperbolic realization of $N$. Conjecturally, the special hypotheses of these papers are not needed: all that should be necessary is that $N$ is irreducible and contains no incompressible tori, and that the angle assignments satisfy $(*)$. See \cite[Conjecture 2.4]{fg-arborescent}.
%
The case when $N$ is a polyhedron is due to Rivin \cite{rivin:polyhedron, rivin:optimization}; the case of solid tori, to Gu\'eritaud \cite{gueritaud:solid-torus}. In both cases, the argument works by first proving that $N$ has an ideal triangulation with a non-empty angle space $\ang(\tau)$, and then proving that the volume functional $\vol$ has a critical point in $\ang(\tau)$. This critical point gives the hyperbolic realization of $N$. In the preprint \cite{schlenker:polyhedral-boundary}, Schlenker gives a more analytic and general argument in the case of incompressible boundary. Conjecturally, the special hypotheses of all these papers are not needed: all that should be necessary is that $N$ is irreducible and contains no incompressible tori, and that the angle assignments satisfy $(*)$. See \cite[Conjecture 2.4]{fg-arborescent}.

\subsection{Generalized angle structures}\label{sec:circle-valued}

All of the angle structures discussed so far have involved strictly positive dihedral angles. One natural generalization of the definitions in Section \ref{sec:angle-struct} would be to allow negative angles, or more generally, to consider angles mod $2\pi$. 

A \emph{generalized angle structure} is an assignment of a real number to every pair of opposite edges in a tetrahedron, so that equations $(2)$ and $(3)$ of Definition \ref{def:angled-polytope} are satisfied, but the inequalities are discarded. The set of all such assignments is denoted  $\mathcal{GA}(\tau)$. Luo and Tillmann showed that for any ideal triangulation $\tau$ of a manifold with torus boundary, $\mathcal{GA}(\tau)$ is always non-empty \cite{luo-tillmann:normal-surf-angle}; in other words, there is no analogue of Theorem \ref{thm:ang-hyperbolization}. Furthermore, they establish a linear--algebraic duality between angle structures and normal surfaces
%
%[[FG: insert]]
(see also Rivin \cite{rivin:optimization}).
For example, the obstruction to finding a non-empty positive polytope $\ang(\tau) \subset \mathcal{GA}(\tau)$ is a certain branched normal surface with non-negative Euler characteristic.

Generalizing further, an \emph{$S^1$--valued angle structure} on a triangulation $\tau$ is an assignment of a real number (mod $2\pi$) to every pair of opposite edges in a tetrahedron, such that
\begin{enumerate}
\item Around each ideal vertex of a tetrahedron, the dihedral angles sum to $\pi$ (mod $2\pi$),
\item Around each edge of $M$, the dihedral angles sum to $0$ (mod $2\pi$).
\end{enumerate}
The set of all $S^1$--valued angle structures on $\tau$ is denoted $\circang(\tau)$.

Just as with real--valued solutions in $\mathcal{GA}(\tau)$, Luo showed that the existence of $S^1$--valued solutions is extremely general. For any triangulated closed pseudo--manifold (i.e., any cell complex obtained by gluing tetrahedra in pairs along all of their faces, whatever the link of a vertex), he showed that $\circang(\tau) \neq \emptyset$, and is a closed smooth manifold \cite[Proposition 2.6]{luo:gen-thurston}.

Even though the existence of $S^1$--valued angle structures does not distinguish the class of hyperbolic manifolds, studying the volume of such a structure can still yield geometric information. Recall from Lemma \ref{lemma:tet-volume} that the Lobachevsky function $\lob$ is $\pi$--periodic; as a result, Definition \ref{def:vol-functional} of the volume functional $\vol$ extends in a natural way to $\circang(\tau)$. Because $\circang(\tau) \neq \emptyset$ is a compact manifold, $\vol$ must achieve a maximum. The work of Luo \cite{luo:gen-thurston} and Luo--Tillmann \cite{luo-tillmann:gen-volume} uses this maximum point to either solve a generalized version of the gluing equations (which yields a representation from a double branched cover of $M$ into $PSL(2, \CC)$), or find certain highly restrictive normal surfaces in $M$. See Luo's survey paper in this volume \cite{luo:survey} for more details.

\subsection{Volume estimates}\label{sec:vol-estimate}
Recall that, by Lemma \ref{lemma:vol-deriv}, the volume functional $\vol$ is concave down on $\ang(\tau)$. As a result, any critical point of $\vol$ must actually be the global maximum of the function over the compact closure $\overline{\ang(\tau)}$. Thus Theorem \ref{thm:volume-max} has the following corollary.

\begin{theorem}\label{thm:vol-estimate}
Let $M$ be an orientable $3$--manifold with boundary consisting of tori, and let $\tau$ be an ideal triangulation of $M$. Suppose that $\vol: \ang(\tau) \to \RR$ has a critical point at $p \in \ang(\tau)$. Then, for any point $q \in \overline{\ang(\tau)}$,
$$\vol(q) \leq \mathrm{vol}(M),$$
with equality iff $q=p$ (i.e., iff $q$ gives the complete hyperbolic metric on $M$).
\end{theorem}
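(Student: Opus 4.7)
The plan is to combine Theorem \ref{thm:volume-max} with the strict concavity established in Lemma \ref{lemma:vol-deriv}, using the line segment from $p$ to $q$ as the one-dimensional object on which we verify the inequality.

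First, by Theorem \ref{thm:volume-max}, the critical point $p$ corresponds to the complete hyperbolic metric on $M$, so $\vol(p) = \mathrm{vol}(M)$. It therefore suffices to show that for every $q \in \overline{\ang(\tau)}$ with $q \neq p$ we have $\vol(q) < \vol(p)$.

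Next, fix such a $q$ and consider the auxiliary function $f:[0,1] \to \RR$ defined by $f(t) = \vol(p + t(q-p))$. Because $\ang(\tau)$ is convex and $p$ lies in its interior, the point $p + t(q-p)$ lies in $\ang(\tau)$ for all $t \in [0,1)$, and $f$ extends continuously to $t=1$ by continuity of $\vol$ on $\overline{\ang(\tau)}$. On $[0,1)$, the function $f$ is smooth, and Lemma \ref{lemma:vol-deriv} gives $f''(t) < 0$ for all $t \in [0,1)$ (with the nonzero tangent vector $q-p$). Since $p$ is a critical point of $\vol$ in $\ang(\tau)$, we have $f'(0) = 0$. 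Integrating $f''$ from $0$ shows that $f'(t) < 0$ for every $t \in (0,1)$, so $f$ is strictly decreasing on $[0,1)$.

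Finally, to push this strict inequality out to the boundary, pick any $t_0 \in (0,1)$: then $f(t_0) < f(0)$ by the previous paragraph, while the monotonicity of $f$ on $[t_0, 1)$ combined with continuity at $t=1$ gives $f(1) \leq f(t_0)$. Hence $\vol(q) = f(1) \leq f(t_0) < f(0) = \vol(p) = \mathrm{vol}(M)$, as desired. The only delicate step is the last one, since strict concavity of $\vol$ is only guaranteed on the open polytope $\ang(\tau)$; the trick of choosing an intermediate point $t_0 \in (0,1)$ and using continuity to descend to the boundary is what bridges this gap.
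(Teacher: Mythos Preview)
Your proof is correct and follows exactly the approach the paper sketches: the paper simply remarks that strict concavity (Lemma \ref{lemma:vol-deriv}) forces any critical point to be the global maximum over $\overline{\ang(\tau)}$, and then invokes Theorem \ref{thm:volume-max} to identify $\vol(p)$ with $\mathrm{vol}(M)$. You have filled in the one detail the paper leaves implicit, namely that strict concavity on the open polytope together with continuity on the closure is enough to push the strict inequality out to boundary points; your segment argument with an intermediate $t_0$ is the standard way to do this.
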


In fact, the analogous statement also holds in the settings of Section \ref{sec:filling} (Dehn filling) or Section \ref{sec:polyhedra} (polyhedral boundary). The uniqueness of a critical point of $\vol$ turns out to be the key idea in Rivin's proof of the uniqueness statement of Theorem \ref{thm:polyhedra}.

% [[FG: edited down the praise a little bit from here on.]]

Theorem \ref{thm:vol-estimate} 
%
%turns out to be a powerful tool for computing 
allows to compute
effective, combinatorial volume estimates for hyperbolic $3$--manifolds. In certain settings, the combinatorics of a $3$--manifold naturally guides a choice of triangulation $\tau$, and the same combinatorial data provide a convenient point $q \in \overline{\ang(\tau)}$. Then, Theorem \ref{thm:vol-estimate} says that $\vol(q)$ is a lower bound on the volume of $M$. 
%
%The strength of this approach 
This approach
is illustrated by the first family of manifolds to which the Casson--Rivin method was successfully applied \cite{gf:bundles}.

A decade after Colin de Verdi{\`e}re, Casson, and Rivin developed the theory of volume maximization in the early 1990s, it was Gu\'eritaud who first applied the method to find the hyperbolic metrics on an infinite family of manifolds, namely punctured torus bundles  \cite{gf:bundles}. In an appendix to the same paper, Futer extended the method to two--bridge links \cite[Appendix]{gf:bundles}. For both of these families of manifolds, the existence of hyperbolic metrics 
%
%had been known for many years, 
was well-known,
but the volume estimates coming from Theorem \ref{thm:vol-estimate} were both new and sharp. Combined with Dehn surgery techniques, the volume estimates from   \cite{gf:bundles} also give explicit, combinatorial bounds for the volume of several families of knot and link complements \cite{fkp:coils, fkp:farey}, as well as of a number of closed manifolds \cite{petronio-vesnin}.

It is worth asking whether the existence of a critical point of $\vol$ in $\ang(\tau)$ is actually necessary for the volume inequality of Theorem \ref{thm:vol-estimate}. Casson conjectured that this inequality holds for any angled triangulation $\tau$, whether or not the tetrahedra of this triangulation can be given positively oriented shapes in the hyperbolic metric on $M$. (Note that, by Theorem \ref{thm:ang-hyperbolization}, the hyperbolic metric must exist.) If proved true, this conjecture would provide a practical tool for finding volume estimates on many more families of $3$--manifolds.

\subsection{Canonical triangulations}\label{sec:canonical}
Nearly everything discussed thus far in this paper has depended on the choice of triangulation. As it turns out, many of the methods already discussed can show that a particular triangulation is \emph{geometrically canonical} for $M$.

Given a hyperbolic $3$--manifold $M$ with $k$ cusps, let $H_1, \ldots, H_k$ be disjoint horospherical neighborhoods of the cusps. Then the Ford--Voronoi domain $\mathcal{F}$ is the set of all points in $M$ that have a unique shortest path to the union of the $H_i$. This is an open set in $M$, whose complement $L = M \setminus \mathcal{F}$ is a compact $2$--complex, called the \emph{cut locus}. The dual to $L$ is an ideal polyhedral decomposition $\mathcal{P}$ of $M$; the $n$--cells of $\mathcal{P}$ are in bijective correspondence with the $(3-n)$--cells of $L$. This is called the \emph{canonical polyhedral decomposition} of $M$, relative to the cusp neighborhoods $H_i$.

The combinatorics of $L$ ---and therefore, of $\mathcal{P}$--- depends only on the relative volumes of the $H_i$. In particular, if $M$ has only one cusp, there are no choices whatsoever, and $\mathcal{P}$ is completely determined by the hyperbolic metric. If the horoballs in $\HH^3$ obtained by lifting the cusp neighborhoods $H_i$ are in ``general position,'' every vertex of $\widetilde{L}$ will have exactly four closest horoballs.  Thus each vertex of $L$ will meet four edges, and the dual  polyhedral decomposition $\mathcal{P}$ will generically be a triangulation.

If $\tau$ is a given triangulation (or, more generally, a given polyhedral decomposition), proving that $\tau$ is canonical amounts to verifying finitely many inequalities about nearest horoballs. Epstein and Penner found a way to translate these inequalities into convexity statements in the Minkowski space $\RR^{3+1}$, where $\HH^3$ is modeled by a hyperboloid \cite{epstein-penner}. More recently, Gu\'eritaud discovered that 
%
%the convexity inequalities that imply canonicity can actually be translated (with some work) into inequalities involving dihedral angles of the polyhedra \cite{gueritaud:thesis}. In practice, information obtained in the course of proving that $\vol: \ang(\tau) \to \RR$ has a critical point also turns out useful for verifying that the triangulation is canonical. 
%
the convexity inequalities that imply canonicity (once translated to inequalities involving dihedral angles of the polyhedra) can be verified using information obtained in the course of showing that $\vol: \ang(\tau) \to \RR$ has a critical point \cite{gueritaud:thesis}.

To date, the method of angled triangulations has found both the hyperbolic metric and the canonical polyhedral decomposition of several families of $3$--manifolds: punctured--torus bundles \cite[Theorem 1.11.1]{gueritaud:thesis}, two-bridge links \cite[Theorem 2.1.7]{gueritaud:thesis}, and certain special arborescent links \cite[Theorem 2.3.1]{gueritaud:thesis}. 
%
%Moving into the realm of infinite--volume hyperbolic manifolds, 
Gu\'eritaud used the same ideas to find the canonical triangulations for convex cores of quasi-Fuchsian punctured--torus groups \cite{gueritaud:quasifuchsian}. All of these 
%
%disparate 
%
families of manifolds actually have closely related combinatorial features, with the structure of the triangulation effectively determined by the combinatorics of $SL(2,\ZZ)$ and continued fractions.

There is a recent result that brings together several themes from this section. Suppose that $M$ is a hyperbolic $3$--manifold with $k$ cusps, and the canonical decomposition of $M$ is indeed a triangulation. If we perform Dehn filling along one of these cusps, the result will (generically) be another hyperbolic manifold $M(p/q)$, with a new geometry. In 
%
%a recent paper,
\cite{gueritaud-schleimer},
Gu\'eritaud and Schleimer use the canonical triangulation of $M$ to completely describe the canonical triangulation of its generic Dehn fillings. Their argument uses Theorems~\ref{thm:angled-surgery} and \ref{thm:polyhedra} to construct a triangulated solid torus with 
%
%exactly the right shape, glues it into $M(p/q)$, and cleverly deploys the angle inequalities derived along the way to show that this triangulation of $M(p/q)$ is canonical. 
the right shape and glue it into $M(p/q)$.
%

% The way in which these themes come together is perhaps the best demonstration of the power of Casson and Rivin's method.

\subsection{Weil rigidity}\label{sec:weil}

We close this paper with an application whose statement has nothing to do with triangulations. The following rigidity theorem, 
due to Weil \cite{weil:rigidity}, precedes Mostow--Prasad rigidity by a dozen years. 

\begin{theorem}\label{thm:weil-rigidity}
Let $M$ be a $3$--manifold with boundary consisting of tori. Then any complete hyperbolic metric on the interior of $M$ is locally rigid: there is no local deformation of the metric through other complete hyperbolic metrics.
\end{theorem}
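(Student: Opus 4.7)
The plan is to reduce Weil rigidity to the uniqueness of critical points of $\vol$ established in Lemma~\ref{lemma:vol-deriv}. The starting point is the given complete hyperbolic metric $g_0$ on the interior of $M$. The first step I would carry out is to produce a geometric ideal triangulation $\tau$ of $M$ in which every tetrahedron has a positively oriented hyperbolic shape with respect to $g_0$. One route is to invoke the Epstein--Penner canonical ideal polyhedral decomposition of $(M,g_0)$ and then subdivide each convex ideal polyhedron into non-degenerate ideal tetrahedra, after a small generic perturbation of the horoball sizes so that the subdivision is compatible across shared faces. This realizes $g_0$ as a point $p_0 \in \ang(\tau)$, and by Theorem~\ref{thm:volume-max}, $p_0$ is a critical point of $\vol : \ang(\tau) \to \RR$.

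The second step is the openness observation: positive orientation is preserved under small deformations of the metric. Let $g_t$ be a smooth family of complete hyperbolic metrics on $M \setminus \bdy M$ with $g_0$ at $t = 0$, and let $\rho_t : \pi_1(M) \to \mathrm{PSL}(2,\CC)$ be the corresponding holonomy representations, depending continuously on $t$. The ideal vertices of each lift of a tetrahedron of $\tau$ develop to points on $\bdy \HH^3$ that depend continuously on $\rho_t$, so for $t$ sufficiently small every tetrahedron of $\tau$ remains non-degenerate and positively oriented in $g_t$. Hence $g_t$ determines a point $p_t \in \ang(\tau)$, and $p_t$ is again a critical point of $\vol$ by Theorem~\ref{thm:volume-max}.

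The third step is immediate: by Lemma~\ref{lemma:vol-deriv}, $\vol$ is strictly concave down on the convex polytope $\ang(\tau)$, and in particular has at most one critical point. Therefore $p_t = p_0$ for all small $t$, so the shape parameters of every tetrahedron of $\tau$ are independent of $t$. Since the developed image of $\widetilde{M}$ inside $\HH^3$ is determined by the shape parameters (together with the combinatorics of $\tau$, which is fixed), the metric $g_t$ is isometric to $g_0$, proving local rigidity.

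The main obstacle I expect is the first step, constructing a geometric ideal triangulation of $(M, g_0)$ with all tetrahedra positively oriented. The Epstein--Penner decomposition provides an ideal polyhedral decomposition, but not necessarily a triangulation, and the refinement into simplices must be carried out in a way that is consistent with face identifications in $M$. A generic perturbation of horoball sizes (putting the cusp neighborhoods in general position) or an explicit coning argument on each polyhedron handles this subtlety. The remaining steps --- openness under perturbation, and uniqueness from strict concavity --- are straightforward consequences of what is already established in the paper.
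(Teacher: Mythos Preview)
Your overall strategy --- reduce to the uniqueness of a critical point of $\vol$ via strict concavity --- is exactly the paper's. The difference, and the genuine gap, is in your first step. You correctly flag it as the main obstacle, but your proposed fixes do not actually close it.

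The existence of a geometric ideal triangulation of $M$ itself (one in which every tetrahedron is positively oriented) is a well-known open problem. Your two suggested repairs both fail in general. Perturbing horoball sizes cannot help when $M$ has a single cusp: there is only one cusp neighborhood up to scale, so there is nothing to perturb, and the canonical decomposition may well have non-tetrahedral cells. As for coning each polyhedron from a vertex, the subdivisions on a shared face must agree from both sides, and there is no known way to make a globally consistent choice; this compatibility obstruction is precisely why the problem is open.

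The paper sidesteps this by invoking a result of Luo, Schleimer, and Tillmann: $M$ always has a finite-sheeted cover $N$ in which the lifted Epstein--Penner decomposition \emph{does} subdivide into positively oriented ideal tetrahedra. One then runs your Steps~2 and~3 on $N$ rather than on $M$, noting that any deformation of the metric on $M$ lifts to one on $N$. So the fix is not to repair the subdivision argument on $M$, but to pass to a cover where the subdivision is known to exist.
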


We thank Marc Culler and Feng Luo for a fruitful discussion that produced the following extremely short proof.

\begin{proof}
Suppose that $M$ admits a complete hyperbolic metric. Choose horoball neighborhoods $H_1, \ldots, H_k$ about the cusps of $M$. Then, as in Section \ref{sec:canonical}, this choice of cusp neighborhoods determines a decomposition $\mathcal{P}$ of $M$ into ideal polyhedra. Luo, Schleimer, and Tillmann showed that $M$ has a finite--sheeted cover $N$, in which the lift of $\mathcal{P}$ decomposes into positively oriented ideal tetrahedra \cite{lst:geodesic-tetrahedra}. Let $\tau$ be this positively oriented ideal triangulation of $N$. 

By Theorem \ref{thm:volume-max}, the complete hyperbolic metric on $N$ (which was obtained by lifting the metric on $M$) represents a critical point of $\vol: \ang(\tau) \to \RR$. By Lemma \ref{lemma:vol-deriv}, this critical point is unique. But any local deformation of the complete metric on $M$ would lift to a deformation of the metric on $N$, which would violate Lemma \ref{lemma:vol-deriv}.
\end{proof}

%\pagebreak

\bibliographystyle{hamsplain}
\bibliography{biblio.bib}

\end{document}